\date{}
\newtheorem{theorem}{Theorem}[section]
\theoremstyle{remark}
\theoremstyle{definition}
\newtheorem{definition}[theorem]{Definition}
\newtheorem{lemma}[theorem]{Lemma}
\newtheorem{rem}[theorem]{Remark}
\newtheorem{notation}[theorem]{Notation}
\newtheorem{proposition}[theorem]{Proposition}
\newtheorem{corollary}[theorem]{Corollary}
\newcommand{\beq}[1]{\begin{equation}\label{#1}}
\newcommand{\enq}[0]{\end{equation}}
\newcommand{\remove}[1]{}
\newcommand{\enote}[1]{{\bf (Ehud:} {#1}{\bf ) }}
\newcommand{\ra}[0]{\rightarrow}
\begin{document}
%  \title{Elections Can be Manipulated Often}
  \title{A Quantitative Version of the Gibbard-Satterthwaite Theorem for
  Three Alternatives}
  \author{{Ehud Friedgut\thanks{ehudf@math.huji.ac.il,
The Hebrew University of Jerusalem.}}
  \quad{Gil Kalai \thanks{kalai@math.huji.ac.il, The Hebrew University of
Jerusalem and Yale University. Work supported by an NSF grant and
by a BSF grant.}}
  \quad{Nathan Keller \thanks{nathan.keller@weizmann.ac.il,
  Weizmann Institute of Science. Work supported by the Koshland
  center for basic research.}}
  \quad{Noam Nisan \thanks{noam.nisan@gmail.com, The Hebrew University of
Jerusalem}}}

\maketitle
\thispagestyle{empty}

\begin{abstract}

The Gibbard-Satterthwaite theorem states that every
non-dictatorial election rule among at least three alternatives
can be strategically manipulated. We prove a quantitative version
of the Gibbard-Satterthwaite theorem: a random manipulation by a
single random voter will succeed with a non-negligible probability
for any election rule among three alternatives that is far from
being a dictatorship and from having only two alternatives in its
range.

\end{abstract}

%\newpage
%\pagebreak \setcounter{page}{1}

\section{Introduction}
A Social Choice Function (SCF), or an election rule, aggregates
the preferences of all members of a society towards a common
social choice. The study of SCFs dates back to the works of
Condorcet in the 18th century, and has expanded greatly in the
last decades.

One of the obviously desired properties of an SCF is
\emph{strategy-proofness}: a voter should not gain from voting
strategically, that is, from reporting false preferences instead
of his true preferences (such voting is called in the sequel
\emph{manipulation}). However, it turns out that this property
cannot be obtained by any reasonable SCF. This was shown in a
landmark theorem of Gibbard~\cite{Gib73} and
Satterthwaite~\cite{Sat75}:
\begin{theorem}[Gibbard, Satterthwaite]
Any SCF which is not a dictatorship (i.e., the choice is not made
according to the preferences of a single voter), and has at least
three alternatives in its range, can be strategically manipulated.
\end{theorem}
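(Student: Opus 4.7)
The plan is to argue by contradiction. Suppose $f$ is strategy-proof, non-dictatorial, and has at least three alternatives in its range. After replacing the alternative set by the range, we may assume $f$ is onto $A$ with $|A|\ge 3$; the goal is to produce a dictator, contradicting the hypothesis, by reduction to Arrow's impossibility theorem.

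The first step is to establish Maskin-style monotonicity: if $f(P)=a$ and $P'$ differs from $P$ only in that a single voter has raised $a$ in his ranking (while keeping the relative order of the other alternatives), then $f(P')=a$. This follows from applying strategy-proofness in both directions, $P\to P'$ and $P'\to P$, and reading off that the outcome at $P'$ cannot be either preferred or less preferred by the deviating voter, so it must still be $a$. Combined with surjectivity, monotonicity yields unanimity: if every voter ranks $a$ first, then $f(P)=a$.

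The core of the reduction is to extract a social welfare function $F$ from $f$. Given a profile $P\in L(A)^n$ and a pair $\{a,b\}\subseteq A$, let $P^{\{a,b\}}$ be a profile in which every voter pushes $a$ and $b$ to the top two positions of his ranking while preserving their relative order as in $P$. Declare $a\succ_{F(P)}b$ iff $f(P^{\{a,b\}})=a$. Monotonicity shows that this is well-defined (independent of the particular way the other alternatives are arranged below the top two) and depends only on the binary preferences between $a$ and $b$ in $P$, which is precisely independence of irrelevant alternatives. Unanimity of $F$ is inherited from that of $f$.

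The main obstacle, which I expect to absorb most of the technical work, is proving that $\succ_{F(P)}$ is transitive. Given $a\succ_{F(P)}b$ and $b\succ_{F(P)}c$, I would consider a profile $P^{\{a,b,c\}}$ where $\{a,b,c\}$ occupy the top three slots of every voter's ranking in the order induced by $P$. Monotonicity forces $f(P^{\{a,b,c\}})\in\{a,b,c\}$, and a sequence of single-voter transitions from $P^{\{a,b,c\}}$ to each of $P^{\{a,b\}}$, $P^{\{b,c\}}$, $P^{\{a,c\}}$ — each analysed via strategy-proofness and monotonicity — is used to rule out $f(P^{\{a,b,c\}})=c$, forcing $a\succ_{F(P)}c$. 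Once transitivity is in hand, $F$ is a genuine social welfare function satisfying IIA and unanimity, so Arrow's theorem produces a dictator. Translating back, the Arrow-dictator of $F$ also dictates $f$: for any profile, lifting that voter's top two alternatives to the top and combining the dictatorship of $F$ with monotonicity pins down $f$'s output to be the dictator's top choice. This contradicts the non-dictatorship hypothesis and completes the proof.
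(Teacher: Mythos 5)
This statement is the classical Gibbard--Satterthwaite theorem, which the paper does not prove at all: it is quoted as a known result with citations to Gibbard and Satterthwaite, and the paper's own contribution is the quantitative analogue. So there is nothing in the paper to compare your argument against line by line; judged on its own, your outline is the standard and correct reduction of Gibbard--Satterthwaite to Arrow's theorem, and it is worth noting that it structurally mirrors what the paper does in the quantitative setting (construct from the SCF an IIA aggregator on pairs, show it is transitive --- there, ``almost transitive'' --- and invoke Arrow --- there, quantitative Arrow). Two points where your sketch should be tightened when written out in full. First, the monotonicity you state (one voter raises $a$, all other relative orders fixed) is weaker than what you actually use later; you need full Maskin monotonicity (if $f(P)=a$ and every voter's lower contour set of $a$ weakly grows, then $f(P')=a$), which does follow from the same two-sided strategy-proofness argument applied one voter at a time, and you also need the ``top set'' lemma (if every voter ranks all of $S$ above all of $A\setminus S$ then $f(P)\in S$), whose proof uses unanimity plus a one-voter-at-a-time strategy-proofness argument rather than monotonicity alone. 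Second, well-definedness of $F^{a,b}$ (independence of how the alternatives outside $\{a,b\}$ are arranged) should be stated as its own application of Maskin monotonicity, since it is exactly what gives IIA. With those two lemmas in place, your $P^{\{a,b,c\}}$ argument for transitivity and your final translation of the Arrow dictator back to $f$ both go through as you describe, so the proposal is sound.
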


The Gibbard-Satterthwaite theorem implies that we cannot hope for
full truthfulness in the context of voting, since any reasonable
election rule can be manipulated. However, it still may be that
such a manipulation is possible only very rarely, and thus can be
neglected in practice.

In this paper we prove a \emph{quantitative} version of the
Gibbard-Satterthwaite theorem in the case of three alternatives,
showing that if the SCF is not very close to a dictatorship or to
having only two alternatives in its range, then even a random
manipulation by a randomly chosen voter will succeed with a
non-negligible probability. Thus, one cannot hope that
manipulations will be negligible for any reasonable election
rule.\footnote{We note that functions that are very close to being
a dictatorship may have a very small number of manipulable
profiles (see e.g.~\cite{MPS04}). However, all of the prominent
SCFs are far from being a dictatorship.}

In order to present our results we need a few standard
definitions. First we formally define an SCF and a profitable
manipulation.
\begin{definition}
An SCF on $n$ voters and $m$ alternatives is a function $F:(L_m)^n
\rightarrow \{1,2,\ldots,m\}$, where $L_m$ is the set of linear
orders on $m$ alternatives. A set of preferences given by the
voters, i.e., $(x_1,x_2,\ldots,x_n) \in (L_m)^n$, is called a
\emph{profile}. When we want to single out the $i$th voter, we
write the profile as $(x_i,x_{-i})$, where $x_{-i}$ denotes the
preferences of the other voters.

A \emph{profitable manipulation} by voter $i$ at the profile
$(x_1,\ldots,x_n)$ is a preference $x'_i \in L_m$, such that
$F(x'_i,x_{-i})$ is preferred by voter $i$ (according to his ``true''
preference order $x_i$) over $F(x_i,x_{-i})$. A
profile is called \emph{manipulable} if there exists a profitable
manipulation for some voter at that profile.
\end{definition}

Now we define the quantitative settings we consider. Throughout
the paper, we make the \emph{impartial culture
assumption}~\cite{Bla58}, meaning that the profiles are
distributed uniformly.\footnote{Note that we cannot hope for an
impossibility result for {\em every} distribution, e.g. since for
every SCF one may consider a distribution on its non-manipulable
profiles.}
%While this
%assumption is certainly unrealistic in most scenarios where SCFs
%are used, it is the natural choice for proving lower
%bounds.
Under the uniform probability measure, the distance of $F$ from a
dictatorship is simply the fraction of values that has to be
changed in order to turn $F$ into a dictatorship. Similarly, in
the case of three alternatives, the distance of $F$ from having
only two alternatives in its range is the minimal probability that
an alternative is elected.

We quantify the probability of manipulation in the following way:
\begin{definition} \label{d:mp}
The {\em manipulation power} of voter $i$ on an SCF $F$, denoted
by $M_i(F)$, is the probability that $x'_i$ is a profitable
manipulation of $F$ by voter $i$ at profile $(x_1, \ldots,x_n)$,
where $x_1, \ldots,x_n$ and $x'_i$ are chosen uniformly at random
from $L_m$.
\end{definition}

We note that our notion of \emph{manipulation power} resembles
notions of power and influence which play important roles in
voting theory and in theoretical computer science. Specifically,
our reliance on the uniform probability distribution makes our
notion analogous to the {\it Banzhaf Power Index} from voting
theory~\cite{Fis73}, and to Ben-Or and Linial's notion of {\it
influence}~\cite{BL}. While the latter two notions coincide for
monotone Boolean functions, the notion of Ben-Or and Linial deals
also with the influence of coalitions (i.e., larger sets of
voters) and with much more general protocols for aggregation.
Similarly, the notion of manipulation power of one and more voters
can be of interest in much greater generality.

Finally, we define a notion related to Generalized Social Welfare
Functions which plays a central role in our proof.
\begin{definition}\label{d:IIA}
A Generalized Social Welfare Function (GSWF) on $n$ voters and $m$
alternatives is a function $G:(L_m)^n \rightarrow
\{0,1\}^{{m}\choose{2}}$. That is, given the preference orders of
the voters, $G$ outputs the preferences of the society amongst
each pair of alternatives.

A GSWF $G$ satisfies the {\it Independence of Irrelevant
Alternatives} (IIA) condition if the preference of the society
amongst any pair of alternatives $(A,B)$ depends only on the
individual preferences between $A$ and $B$, and not on other
alternatives.
\end{definition}

Now we are ready to state our main theorem.
\begin{theorem} \label{t:mt} There exist universal constants
$C,C'>0$ such that for every $\epsilon>0$ and any $n$ the
following holds:
\begin{itemize}
\item If $F$ is an SCF on $n$ voters and three alternatives, such
that the distance of $F$ from a dictatorship and from having only
two alternatives in its range is at least $\epsilon$, then
\[
\sum_{i=1}^n M_i(F) \ge C \cdot \epsilon^6.
\]
\item If, in addition, $F$ is neutral (that is, invariant under
permutation of the alternatives), then:
\[
\sum_{i=1}^n M_i(F) \ge C' \cdot \epsilon^2.
\]
\end{itemize}
\end{theorem}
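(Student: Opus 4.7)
The plan is to reduce the theorem to a quantitative form of Arrow's impossibility theorem for generalized social welfare functions (GSWFs). Given the SCF $F$ on three alternatives $A,B,C$, I would associate an auxiliary GSWF $G_F$ whose failure to satisfy IIA is controlled by the total manipulation power $\sum_i M_i(F)$. A quantitative Arrow-type theorem will then force $G_F$ to be close to a dictatorship or to have a near-constant pairwise-comparison coordinate, and from this one recovers that $F$ itself is close to a dictatorship or close to having only two alternatives in its range.

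First I would construct $G_F$ by defining, for each ordered pair $(A,B)$, a Boolean pairwise-comparison function $f_{A>B}(x)$ that equals $1$ exactly when $F(x^{AB})=A$, where $x^{AB}$ is obtained from $x$ by moving $A$ and $B$ to the top two positions of each voter's preference while keeping their relative order unchanged. The triple $G_F=(f_{A>B},f_{B>C},f_{A>C})$ always yields a transitive outcome, being extracted from a single-valued $F$.

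Next I would quantitatively compare the failure of IIA for $G_F$ with the manipulation power of $F$. The claim is that the total probability, summed over voters $i$ and over pairs of alternatives, that voter $i$ can flip some pairwise $G_F$-verdict by modifying only his ranking of the third alternative is at most $O(\sum_i M_i(F))$. The point is that whenever such an IIA-failure occurs at a profile $x$, voter $i$ has a profitable manipulation in one of the two profiles involved: switching his position for the ``irrelevant'' alternative changes the elected alternative between $A$ and $B$, and his true preference between $A$ and $B$ must favor one of the two directions. At this stage I would also verify that closeness of $G_F$ to a dictatorial GSWF, or to a GSWF with a near-constant coordinate, translates back to closeness of $F$ to a dictatorship or to having range of size two, respectively, with only constant quantitative loss.

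Finally, I would apply a quantitative Arrow-type theorem in the spirit of Kalai's Fourier-analytic proof of Arrow's theorem: an always-transitive, approximately-IIA GSWF that is $\epsilon$-far from every dictatorship and from every GSWF with a near-constant pairwise coordinate must fail IIA with probability at least polynomial in $\epsilon$. Combining this with the bound from the preceding step yields $\sum_i M_i(F)\ge C\epsilon^6$. The hardest part will be proving this quantitative Arrow theorem with the required polynomial dependence; I expect the argument to split into cases according to whether each pairwise function is close to a constant, close to a dictator, or genuinely far from both, and to use an FKN-style stability theorem together with invariance-principle estimates in the last case. The tracking of constants through these nested case distinctions is what produces the sixth power. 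For the neutral version, the three pairwise functions coincide up to permutation of alternatives, which collapses the case analysis into essentially a single application of an FKN-type theorem and yields the cleaner $\epsilon^2$ bound.
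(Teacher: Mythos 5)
Your high-level plan (reduce to a quantitative Arrow theorem via an auxiliary GSWF) matches the paper's, but the specific construction has its key properties reversed, and this sends you toward a version of quantitative Arrow that is not the one available. Since $x^{AB}$ depends only on each voter's relative order of $A$ and $B$, your $f_{A>B}$ satisfies IIA \emph{exactly} by construction; what fails is transitivity, not IIA. The three verdicts $f_{A>B}(x)$, $f_{B>C}(x)$, $f_{A>C}(x)$ are read off from three \emph{different} profiles $x^{AB}$, $x^{BC}$, $x^{AC}$, so nothing forces consistency and the claim that $G_F$ ``always yields a transitive outcome'' is false (e.g.\ $F(x^{AB})=A$, $F(x^{BC})=B$, $F(x^{AC})=C$ gives a cycle). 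Moreover $f_{A>B}$ is undefined when $F(x^{AB})=C$, which is possible since $F$ is not assumed Pareto. The quantitative Arrow theorems that exist (Kalai, Mossel, Keller), and that the paper invokes as black boxes, are of the form ``exactly IIA $+$ far from the trivial class $\Rightarrow$ non-transitive with probability $\ge C\epsilon^3$ (resp.\ $C\epsilon$ in the neutral case)''; the dual statement you propose to prove from scratch via FKN and invariance-principle estimates is a major unproven component, and it is precisely to avoid needing it that the paper builds a $G$ that is exactly IIA (defining $G^{a,b}(x)$ by majority of $F$ over the fiber $\{x': x'^{a,b}=x^{a,b}\}$) and only approximately transitive.

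The second genuine gap is the one-sentence comparison between the bad event for $G_F$ and $\sum_i M_i(F)$. The profiles $x$ and $x^{AB}$ differ in the coordinates of many voters, so a single non-transitivity (or undefinedness) instance does not directly yield a single-voter manipulation; a naive hybrid argument over voters loses factors of $n$, which would destroy the $n$-free bound claimed. The paper's way around this is its first step: a directed isoperimetric inequality on $\{0,1,2\}^n$, proved by shifting plus Harris/FKG, giving $|\partial A|+|\partial B|\ge 3^{-n}|A||B|$ for disjoint $A,B$; this converts the global quantity $M^{a,b}(F)$ (resampling \emph{all} voters' placements of $c$) into a sum of single-voter edge-boundary terms, each of which is charged to a manipulation, yielding $M^{a,b}(F)\le 6\sum_i M_i(F)$, followed by a Cauchy--Schwarz step ($M^{a,b}\ge (N^{a,b})^2$, the source of the square that turns $\epsilon^3$ into $\epsilon^6$). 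Note also that your manipulation argument must handle the case where the winner changes from $A$ to $C$ rather than to $B$: then the voter's $A$-vs-$B$ preference is irrelevant, and the paper instead uses that the move only lowers $c$, so that if the voter preferred $A$ to $C$ before the move he still does after, making the reverse move the manipulation. As written, your proposal is missing the isoperimetric step, the Cauchy--Schwarz step, and a correct (exactly-IIA) construction of $G$, and it defers the entire analytic content to an unproved variant of quantitative Arrow.
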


We note that the value of the constant $C$ obtained in our proof
of Theorem~\ref{t:mt} is extremely low (see
Remark~\ref{Rem:Const}), and thus the first assertion applies only
in the asymptotic setting. Unlike the value of $C$, the obtained
value of $C'$ is reasonable.

\medskip

The proof of the theorem consists of three steps:
\begin{enumerate}
\item \textbf{Reduction from low manipulation power to low
dependence on irrelevant alternatives:} We show that if
$\sum_{i=1}^n M_i(F)$ is small, then in some sense, the question
whether the output of $F$ is alternative $A$ or alternative $B$
depends only a little on alternatives other than $A$ and $B$.
Specifically, the probability of changing the outcome of $F$ from
$A$ to $B$ by altering the individual preferences between
\emph{all other alternatives} (and leaving the preferences between
$A$ and $B$ unchanged) is low. This reduction is obtained by a
directed isoperimetric inequality, which we prove using the FKG
correlation inequality~\cite{FKG71} (or, more precisely, using
Harris' inequality~\cite{Ha60}).

\item \textbf{Reduction from an SCF with low dependence on
irrelevant alternatives to a GSWF with a low paradox probability:}
We show that given an SCF $F$ on three alternatives with low
dependence on irrelevant alternatives, one can construct a GSWF
$G$ on three alternatives which satisfies the IIA condition and
has a low probability of paradox (a paradox occurs if for some
profile, the society prefers $A$ over $B$, $B$ over $C$ and $C$
over $A$). Furthermore, the distance of $G$ from dictatorship and
from always ranking one alternative at the top/bottom is roughly
the same as the distance of $F$ from dictatorship and from having
only two alternatives in its range, respectively.

\item \textbf{Applying a quantitative version of Arrow's
impossibility theorem:} We use the quantitative versions of
Arrow's theorem\footnote{Arrow's theorem and its quantitative
versions are described in Section~\ref{sec:second-reduction}.}
obtained by Kalai~\cite{Ka02} (in the neutral case),
Mossel~\cite{Mos09}, and Keller~\cite{Kel10} to show that since
$G$ has low paradox probability, it has to be close either to a
dictatorship or to always ranking one alternative at the
top/bottom. Translation of the result to $F$ yields the assertion
of the theorem. We note that the proofs of the quantitative Arrow
theorem are quite complex and use discrete Fourier analysis on the
Boolean hypercube and hypercontractive inequalities.
\end{enumerate}

For a fixed value of $\epsilon$, Theorem~\ref{t:mt} implies lower
bounds of $\Omega(1)$ and $\Omega(1/n)$ on $\sum_i M_i(F)$ and
$\max_i M_i(F)$, respectively. It is easy to see that the lower
bound on $\sum_i M_i(F)$ cannot be improved (up to the value of
$C$ and the dependence on $\epsilon$), and that the lower bound on
$\max_i M_i(F)$ cannot be improved to $\Omega(1)$. The latter
follows since for the plurality SCF on $n$ voters, only an
$O(1/\sqrt{n})$ fraction of the profiles can be manipulated at all
by any single voter, and thus $M_i(Plurality) = O(1/\sqrt{n})$
for all $i$. However, it is still possible that one can obtain a
better lower bound than $\Omega(1/n)$ on $\max_i M_i(F)$, and we
leave this as our first open problem.

Our second open problem concerns the case of more than three
alternatives, $m>3$. While some parts of our proof extend to this
case (see Section~\ref{sec:more-alternatives}), we were not able
to extend all required parts of the proof. After the preliminary
version of this paper was written, several papers tried to resolve
this case, and the most notable result is by Isaksson, Kindler,
and Mossel~\cite{IKM10}, who obtained a quantitative
Gibbard-Satterthwaite theorem for $m>3$ alternatives, under the
only additional assumption of neutrality (see Theorem~\ref{t:ikm}
below). However, the case of general SCFs on more than three
alternatives is still open, and we leave it as our second open
problem. We do conjecture that the theorem generalizes to $m>3$,
perhaps with the exact form of the bound decreasing polynomially
in $m$ (like the bound obtained by Isaksson et al. in the neutral
case).

\medskip

Our result can be viewed as part of the study of computational
complexity as a barrier against manipulation in elections. A brief
overview of the work in this direction, several follow-up results,
and a short discussion of their implications is presented in
Section~\ref{sec:related-work}. In
Sections~\ref{sec:first-reduction},~\ref{sec:second-reduction},
and~\ref{sec:quantitative-arrow} we present the three steps of our
proof. Finally, we discuss the case of more than three
alternatives in Section~\ref{sec:more-alternatives}.

\section{Related Work}
\label{sec:related-work}

Since the Gibbard-Satterthwaite theorem was presented, numerous
works studied ways to overcome the strategic voting obstacle. The
two best-known ways are allowing payments (see,
e.g.,~\cite{Gro73}) and restricting the voters' preferences
(see~\cite{Mou80}).

Another way, suggested in 1989 by Bartholdi, Tovey, and
Trick~\cite{BTT89}, is to use a \emph{computational barrier}. That
is, to show that there exist reasonable SCFs for which, while a
manipulation does exist, it cannot be found efficiently, and thus
in practice, the SCF can be considered strategy-proof. Bartholdi
et al.~\cite{BTT89} constructed a concrete SCF for which they
proved that finding a profitable manipulation is $NP$-hard as an
algorithmic problem. This approach was further explored by
Bartholdi and Orlin~\cite{BO91} who proved that manipulation is
$NP$-hard also for the well-known Single Transferable Vote (STV)
election rule. In a related line of research, several papers
showed that for various SCFs, the problem of \emph{coalitional
manipulation} (i.e., when a coalition of voters tries to
coordinate their ballots in order to get their favorite
alternative elected), is $NP$-hard for some SCFs, even for a
constant number of alternatives (see
~\cite{CS03,CSL07,EL05,FHH06,HHR07}).

However, while the results in this direction are encouraging, the
computational barrier they suggest against manipulation may be
practically insufficient. This is because all the results study
the \emph{worst case complexity} of manipulation, and show that
manipulation is computationally hard for specific instances. In
order to practically prevent manipulation, one should show that it
is computationally hard for most instances, or at least in the
\emph{average case}.

In the last few years, several papers considered the hardness of
manipulation in the average
case~\cite{CS06,PR06,PR07,XC08b,ZPR09}.\footnote{It should be
noted that the success probability of a random manipulation for
SCFs with a small number of voters and alternatives was studied a
long time ago in a paper of Kelly~\cite{Ke93}.} Their results suggest
that unlike worst-case complexity, it appears that various SCFs
can be manipulated relatively easily in the average case -- that
is, for an instance chosen at random according to some typical
distribution. However, all these results consider specific families
of SCFs, and manipulation by coalitions rather than by individual voters.

Our results also study manipulation in the ``average case'' by
examining the success probability of a random manipulation by a
randomly chosen voter, and yield a general impossibility result in
the case of three alternatives, showing that for any reasonable
SCF, such manipulation succeeds with a non-negligible probability.
However, our result does not have direct implications on the study
of computational hardness of manipulation, since in the case of a
constant number of alternatives, the number of possible
manipulations by a single voter is constant, and thus manipulation
by a single voter cannot be computationally hard in this setting.

\bigskip

\noindent \textbf{Follow-Up Work}
\label{sec:sub:follow-up}

\bigskip

Since the preliminary version of this paper~\cite{FKN08} appeared
in FOCS'08, three follow-up works generalized its results to more
than three alternatives, under various additional constraints.

The first follow-up work is by Xia and Conitzer~\cite{XC08}, who
use similar techniques to show that a random manipulation will
succeed with probability of $\Omega(1/n)$ for any SCF on a
constant number of alternatives satisfying the following five
conditions:
\begin{enumerate}
\item Homogeneity -- The outcome of the election does not change
if each vote is replaced by $k$ copies of it.

\item Anonymity -- The SCF treats all the voters equally.

\item Non-Imposition -- Any alternative can be elected.

\item Cancelling out -- The outcome is not changed by adding the
set of all possible linear orders of the alternatives as
additional votes.

\item A complex stability condition (see~\cite{XC08} for the exact
formulation).
\end{enumerate}

While the conditions look a bit restrictive, Xia and
Conitzer show that they hold for several well-known SCFs, including
all positional scoring rules, STV, Copeland, Maximin, and Ranked
Pairs.

\medskip

The second follow-up work is by Dobzinski and
Procaccia~\cite{DP09}. They consider the case of two voters and an
arbitrary number $m$ of alternatives, and show that if an SCF is
$\epsilon$-far from a dictatorship and satisfies Pareto optimality
(i.e., if both voters prefer alternative $A$ over $B$, then $B$ is
not elected), then a random manipulation will succeed with
probability at least $\epsilon/m^8$. The techniques used in the
proof of~\cite{DP09} are relatively simple, and the authors
suggest that possibly their result can be generalized to any number of
voters, by modifying an inductive argument of
Svensson~\cite{Sve99} that extends the proof of the classical
Gibbard-Satterthwaite theorem from two voters to $n$ voters, for
any $n$.

\medskip

The most recent, and most notable, work is by Isaksson, Kindler
and Mossel~\cite{IKM10} who prove a quantitative version of the
Gibbard-Satterthwaite theorem for a general number of
alternatives, under the only additional assumption of neutrality.
\begin{theorem}[Isaksson, Kindler, and Mossel] \label{t:ikm}
Let $F$ be a neutral SCF on $m \geq 4$ alternatives which is at
least $\epsilon$-far from a dictatorship. Consider a random
manipulation generated by choosing a profile and a manipulating
voter at random, and replacing four adjacent alternatives in the
preference order of that voter by a random permutation of them.
Then
\[
\Pr[\mbox{ The manipulation is successful }] \geq
\frac{\epsilon^2}{10^9 n^4 m^{34}}.
\]
\end{theorem}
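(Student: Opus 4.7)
The plan is to extend the three-step strategy that proves Theorem \ref{t:mt} to any $m \ge 4$ alternatives, using neutrality in place of the ``range has at least three alternatives'' hypothesis and the extra randomness of a random $4$-adjacent reshuffle (rather than a single swap) to absorb the combinatorial loss that arises from having many pairs of alternatives interact simultaneously. In the first step I would establish a directed isoperimetric inequality: if the total manipulation power $\sum_i M_i(F)$ is small (measured with respect to $4$-adjacent permutations), then for every ordered pair $(A,B)$ the probability that altering the ballot of a single voter on alternatives \emph{other than} $A$ and $B$ changes the winner from $A$ to $B$ is also small. The target inequality lives on the Cayley-like graph on $(L_m)^n$ whose edges correspond to replacing four consecutive alternatives in a single ballot by a random permutation; the local structure of each edge is $S_4$, which is rich enough to support a Harris--FKG style comparison between successful manipulations and ``irrelevant'' boundary steps, analogous to the argument in Section \ref{sec:first-reduction}.

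In the second step I would use neutrality to build a GSWF $G$ with small paradox probability. For each pair $(A,B)$, define a Boolean pairwise preference indicator $G_{AB}$ derived canonically from $F$; neutrality forces these indicators to be mutually consistent under permutations of alternatives, and Step 1 guarantees that, up to an error of order $\mathrm{poly}(n,m)\sum_i M_i(F)$, the value of $G_{AB}$ depends only on the voters' individual preferences between $A$ and $B$. Projecting on this dependence yields a GSWF $G$ satisfying IIA whose paradox probability on every triple is correspondingly small. Neutrality of $F$ additionally forces the distribution of the winner to be uniform on the $m$ alternatives, so $F$ is automatically $\Omega(1/m)$-far from having any alternative missing from the range, which translates to $G$ being far from always ranking some alternative at the top or the bottom.

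In the third step I would invoke Mossel's quantitative Arrow theorem \cite{Mos09}, which is valid for any number of alternatives: if a GSWF satisfies IIA, is far from every dictatorship, and is far from always ranking some alternative at the top/bottom, then its paradox probability on some triple is bounded below by a polynomial in $\epsilon$ and $1/m$. Combined with Step 2 this yields a lower bound on $\sum_i M_i(F)$ of the form $\epsilon^2/(n^{c}m^{c'})$; dividing by $n$ to convert the total manipulation power into the success probability of a \emph{single} random voter performing a random $4$-adjacent reshuffle then gives the bound $\epsilon^2/(10^9 n^4 m^{34})$ stated in the theorem, with the precise exponents emerging from honest bookkeeping through the three reductions.

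The main obstacle is Step 1. In the three-alternative case the Boolean structure of $(A,B)$-versus-the-unique-third-alternative makes the directed isoperimetric inequality elementary to derive from Harris's correlation inequality, but for $m \ge 4$ the ``irrelevant'' alternatives carry a rich combinatorial structure and many pairs of alternatives compete simultaneously. Securing the required boundary inequality with only polynomial loss in $m$ is precisely the reason the manipulation is taken to be a random permutation of four adjacent alternatives rather than a single adjacent swap: the larger local group $S_4$ provides enough mixing to support a clean comparison, and the resulting loss contributes the large exponent of $m$ to the final denominator. Step 2 is then a neutrality-driven bookkeeping argument and Step 3 is a black-box invocation of the quantitative Arrow theorem, so the technical heart of the proof lies in this first reduction.
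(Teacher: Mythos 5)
This theorem is not proved in the paper at all: it is quoted from Isaksson, Kindler, and Mossel~\cite{IKM10} as a follow-up result, and the paper explicitly notes that their proof is ``combinatorial and geometric'' and rests on a generalization of the canonical path method yielding isoperimetric inequalities for the interface of \emph{three} bodies --- not on the FKG/Harris correlation argument of Section~\ref{sec:first-reduction}. Your plan is to run the present paper's three-step reduction for general $m$, but the step you yourself identify as ``the main obstacle'' is exactly the step the authors state they could not generalize: Section~\ref{sec:more-alternatives} says in so many words that the first reduction (from low manipulation power to low dependence on irrelevant alternatives) does not extend beyond three alternatives by their methods. Your proposal offers no substitute argument --- asserting that the local $S_4$ structure ``is rich enough to support a Harris--FKG style comparison'' is a hope, not a proof. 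The reason the three-alternative argument works is that, once $x^{a,b}$ is fixed, the remaining freedom is the totally ordered set $\{0,1,2\}^n$ (three positions for the single irrelevant alternative $c$), on which shifting and Harris' inequality apply cleanly; for $m\ge 4$ the irrelevant alternatives do not form a product of chains and the monotonization/correlation machinery has no analogue. This is precisely the difficulty that forced~\cite{IKM10} to invent a different isoperimetric technique.

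There is a second concrete gap in your Step 3. For $m>3$ the reduction of Lemma~\ref{lemma:second-reduction-general} controls $NGCW(G)$, the probability of having no Generalized Condorcet Winner, which can be far smaller than the non-transitivity probability $NT(G)$ that Mossel's quantitative Arrow theorem bounds from below. The paper must therefore prove a separate statement (Theorem~\ref{Thm:NGCW}) lower-bounding $NGCW(G)$ under neutrality, and the bound it obtains, $\delta_m \ge (C\epsilon)^{\lfloor m/3\rfloor}$, degrades \emph{exponentially} in $m$. So even if your Step 1 could be repaired, chaining through the paper's machinery cannot produce the polynomial dependence $\epsilon^2/(10^9 n^4 m^{34})$ claimed in the theorem; the ``honest bookkeeping'' you invoke would yield an exponentially worse bound. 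The stated constants genuinely require the geometric proof of~\cite{IKM10}, which bypasses the quantitative Arrow theorem entirely.
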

The techniques used by Isaksson et al. are combinatorial and
geometric, and contain a generalization of the
canonical path method which allows to prove isoperimetric
inequalities for the interface of three bodies.

The result of Isaksson et al. shows that for any neutral SCF which
is far from a dictatorship, a random manipulation by a single
randomly chosen voter will succeed with a non-negligible
probability. Thus, a single voter with black-box access to the SCF
can find a manipulation efficiently.

However, this result still does not imply that the agenda of using
computational hardness as a barrier against manipulation is
completely hopeless, for three reasons:
\begin{enumerate}
\item The result relies on the assumption that the votes are
distributed uniformly. It is possible to argue that in real-life
situations, the distribution of the votes is far from uniform, and
thus the result does not apply.

\item The result applies only to neutral SCFs.

\item While the result implies that with a non-negligible
probability, a random manipulation by a randomly chosen voter
succeeds, it is still possible that for {\it most of the voters},
manipulation cannot be found efficiently (or even at all), while
only for a polynomially small portion of the voters a manipulation
can be found efficiently. Thus, it is possible that only a few
voters can manipulate efficiently, while most voters cannot.
\end{enumerate}

For an extensive overview of the study of computational complexity
as a barrier against manipulation, and a further discussion on the
implication of our results and the results of the follow-up works,
we refer the reader to the survey~\cite{FP10} by Faliszewski and
Procaccia.

\section{Reduction from Low Manipulation Power to Low Dependence
on Irrelevant Alternatives} \label{sec:first-reduction}

In this section we show that if $F$ is an SCF on three
alternatives such that the manipulation power\footnote{See
Definition~\ref{d:mp}.} of the voters on $F$ is small, then in
some sense, the dependence of $F$ on irrelevant alternatives is
low. We quantify this notion as follows:
\begin{definition}
Let $F$ be an SCF on three alternatives and let $a,b$ be two
alternatives. For a profile
$x \in (L_3)^n$, denote by $x^{a,b} \in \{0,1\}^n$ the vector
which represents the preferences of the voters between $a$ and
$b$, where $x_i^{a,b}=1$ if the $i$th voter prefers $a$ over $b$,
and $x_i^{a,b}=0$ otherwise. The dependence of the choice between
$a$ and $b$ on the (irrelevant) alternative $c$ is:
\[
M^{a,b}(F) = \Pr[F(x)=a, \:F(x')=b],
\]
where $x,x' \in (L_3)^n$ are chosen at random, subject to the
restriction $x^{a,b}=(x')^{a,b}$.
\end{definition}
By the definition, $M^{a,b}(F)$ measures how often a change of the
individual preferences between the alternative $c$ and the
alternatives $a,b$ leads to changing the output of $F$ from $a$ to
$b$ or vice versa. Thus, the notion measures how much the
(irrelevant) alternative $c$ affects the question of whether $a$ or
$b$ is elected.

We note that $M^{a,b}$ can be also viewed as measuring kind of a
manipulation, where all the voters together attempt to change the
output of $F$ to be $b$ rather than $a$ by re-choosing at random
all their preferences -- except for those between $a$ and $b$.
However, this definition does not require that anyone in
particular gains from changing the output.

The result we prove is the following:
\begin{lemma}\label{lemma:first-reduction}
Let $F$ be an SCF on three alternatives. Then for every pair of
alternatives $a,b$,
\[
M^{a,b}(F) \leq 6 \sum_{i=1}^n M_i(F).
\]
\end{lemma}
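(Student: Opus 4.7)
The plan is to bound $M^{a,b}(F) = \Pr[F(X)=a,\,F(X')=b]$ (for $(X,X')$ uniform subject to $X^{a,b} = (X')^{a,b}$) by interpolating between $X$ and $X'$ one voter at a time and charging each output transition along the resulting path to a single-voter manipulation. For $0\le i\le n$, let $P_i$ be the profile that agrees with $X'$ on voters $1,\ldots,i$ and with $X$ on the remaining voters; consecutive profiles $P_{i-1},P_i$ then differ only in voter $i$'s ranking, and since $X^{a,b}=(X')^{a,b}$, each such step preserves the voter's $a$-vs-$b$ preference and only shifts the position of $c$.

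First I would handle the direct contribution. Let $T_{a\to b}$ count the indices $i$ with $F(P_{i-1})=a$ and $F(P_i)=b$. Conditioning on the other voters' preferences $z_{-i}$ and writing $g(y):=F(y,z_{-i})$, the probability of a direct transition at step $i$ equals the fraction of pairs $(y,y')\in L_3\times L_3$ with $y^{a,b}=(y')^{a,b}$ and $g(y)=a,\,g(y')=b$. The key observation is that, because $y$ and $y'$ share the $a,b$-order, exactly one of $y\to y'$ or $y'\to y$ is a profitable manipulation at $z_{-i}$: the voter's preference between the two candidate outputs is unambiguous, so swapping in the direction that moves the outcome toward the preferred alternative is profitable. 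A simple injection of such pairs into the profitable manipulations at $z_{-i}$, combined with the $18$-vs-$36$ ratio of sample sizes (pairs with matched $a,b$-preference vs.\ all pairs in $L_3\times L_3$), gives $E[T_{a\to b}] \le 2\sum_{i=1}^n M_i(F)$.

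The remaining difficulty, and the main obstacle, is the indirect case: the event $\{F(X)=a,\,F(X')=b\}$ can occur even when the output sequence $F(P_0),\ldots,F(P_n)$ contains no direct $a\to b$ step but instead detours through $c$. Individual transitions involving $c$ need not correspond to profitable manipulations; for instance, a pair $(abc,cab)$ with $(g(abc),g(cab))=(a,c)$ is not manipulable in either direction because the voter in $cab$ already ranks $c$ first. To handle this I would use the fact that, conditional on $X^{a,b}$, the $c$-position vectors of $X$ and $X'$ are independent and uniform over a product space, and apply Harris' inequality (a special case of FKG) to suitably chosen monotone events in the induced partial order on $c$-positions. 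The resulting directed isoperimetric inequality bounds the probability of an indirect path in terms of single-voter sensitivities, which are in turn controlled by manipulation power. The constant $6$ in the final bound emerges from combining the factor $2$ for direct transitions with the loss in this Harris-based bound on the indirect contribution; identifying the right monotone events and partial order so that Harris applies cleanly, and checking that the resulting bound is genuinely dominated by $\sum_i M_i(F)$, is the principal technical step.
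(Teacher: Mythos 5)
Your proposal correctly identifies the two ingredients the paper uses (a charging argument for single-voter output transitions, and Harris/FKG on the lattice of $c$-positions), and your treatment of the direct $a\to b$ transitions is sound. But the proof is not complete: the entire difficulty of the lemma lives in the ``indirect'' case, and there you defer exactly the step that constitutes the proof. Saying you will ``apply Harris' inequality to suitably chosen monotone events'' and that the resulting sensitivities ``are in turn controlled by manipulation power'' leaves two things unestablished, and the second of these is false in the generality in which you state it: the probability that changing one voter's $c$-position flips the outcome is \emph{not} in general bounded by manipulation power. Your own example shows this for edges leaving the $a$-set in the wrong direction, and the same failure occurs for edges entering the $a$-set from the $c$-set (e.g.\ $cab\to abc$ flipping the output from $c$ to $a$ is profitable in neither direction). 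Only a specific \emph{directed} boundary is chargeable, and identifying it is the key idea, not a detail.

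What the paper does, and what your sketch is missing, is the following. It never forms an interpolation path or splits into direct/indirect transitions. Fixing $x^{a,b}$, it views the fiber as $\{0,1,2\}^n$ (positions of $c$), lets $A$ and $B$ be the preimages of outputs $a$ and $b$, and writes $M^{a,b}(F)=\mathbb{E}\left[|A|\,|B|/9^n\right]$. It then orients edges only in the $c$-\emph{lowering} direction ($0\to1$, $1\to2$, $0\to2$) and observes that every such edge whose tail lies in $A$ and whose head leaves $A$ is chargeable: the output changes from $a$ to some $t$, and either the voter prefers $t$ (forward move is a manipulation) or he prefers $a$, in which case he still prefers $a$ after $c$ has been lowered, so the reverse move is a manipulation. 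This gives $\sum_i M_i(F)\ge \tfrac{1}{6}3^{-n}\,\mathbb{E}\left[|\partial A|+|\partial B|\right]$ for this upper edge boundary. The Harris/FKG input is then packaged as a clean combinatorial statement about \emph{disjoint} sets: for disjoint $A,B\subseteq\{0,1,2\}^n$ one has $|\partial A|+|\partial B|\ge |A|\,|B|/3^n$, proved by shifting $A$ and $B$ upward to monotone sets $A',B'$ with $|A'\setminus A|\le|\partial A|$ and $|B'\setminus B|\le|\partial B|$, applying Harris to get $|A'\cap B'|/3^n\ge(|A|/3^n)(|B|/3^n)$, and using $A'\cap B'\subseteq(A'\setminus A)\cup(B'\setminus B)$. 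Your sketch would be complete if you supplied this isoperimetric inequality and the observation that only the $c$-lowering boundary of $A$ and of $B$ need be counted; as written, the ``principal technical step'' you acknowledge is the lemma itself.
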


In order to prove Lemma~\ref{lemma:first-reduction}, we define a
certain combinatorial structure and relate it both to $M^{a,b}(F)$
and to $\sum_i M_i(F)$.

We begin with a convenient way to represent a profile $x \in
(L_3)^n$, given the individual preferences between $a$ and $b$
(denoted by $x^{a,b}$). Note that for any specific value
$z^{a,b}$ of $x^{a,b}$, there are exactly $3^n$ possible values of
$x$ that agree with it. Indeed, the agreement of $x$ with
$z^{a,b}$ fixes the preferences of all voters between $a$ and $b$
in $x$, and each voter may choose one of three locations for $c$:
above both $a$ and $b$, below both of them, or between them. Thus,
for every fixed $z^{a,b}$ we can view the set $\{x | x^{a,b}
=z^{a,b}\}$ as isomorphic to $\{0,1,2\}^n =
\{above,between,below\}^n$. We use $v=(v_1,\ldots,v_n)$ to denote
an element in this set. Thus, once $x_i^{a,b}$ is fixed, $v_i \in
\{0,1,2\}$ encodes both $x_i^{b,c}$ and $x_i^{c,a}$. For example,
$x_i^{a,b}=0$ and $v_i=0$ encodes the preference $c \succ_i b
\succ_i a$.

Next, we define two sets which are closely related to the
definition of $M^{a,b}(F)$.
\begin{definition}\label{papb}
For every value $z^{a,b}$ of the preferences between $a$ and $b$, let
\[
A(z^{a,b})= \{x | x^{a,b} =z^{a,b}, \ F(x)=a \}, \mbox{ and }
\]
\[
B(z^{a,b})= \{x | x^{a,b} =z^{a,b},\ F(x)=b \}.
\]
Both $A(z^{a,b})$ and $B(z^{a,b})$ are viewed as residing in the
space $\{0,1,2\}^n$.
\end{definition}
In terms of these definitions, we clearly have:
\begin{equation}\label{Eq:Mab}
M^{a,b}(f) = \mathbb{E}_{x \in (L_3)^n} \left[ {{|A(x^{a,b})|}
\over {3^n}} \cdot {{|B(x^{a,b})|} \over {3^n}}\right].
\end{equation}

In order to relate $M_i(F)$ to the sets $A(x^{a,b})$ and
$B(x^{a,b})$, we endow the set $\{0,1,2\}^n$ with a structure of a
directed graph, whose edges correspond to (some of) the profitable
manipulations by voter $i$. For each fixed value of $x^{a,b}$, for
each $i \in \{1,2,\ldots,n\}$, and for each $v_{-i} \in
\{0,1,2\}^{n-1}$, the graph has three directed edges in direction
$i$ between the possible values of $v_i$: $0 \rightarrow 1$, $1
\rightarrow 2$, and $0 \rightarrow 2$. The following definition
counts the directed edges going ``upward'' from a subset of
$\{0,1,2\}^n$.
\begin{definition}
Let $A \subseteq \{0,1,2\}^n$. The upper edge border of $A$ in the
$i$th direction, denoted by $\partial_i A$, is the set of directed
edges in the $i$th direction defined above whose tail is in $A$
and whose head is not in $A$. That is,
\[
\partial_i A = \{(v_{-i},v_i,v'_i) \ |\ (v_{-i},v_i) \in A,
\ (v_{-i},v'_i) \not \in A, \ v_i<v'_i \}.
\]
The upper edge border of $A$ is $\partial A = \bigcup_i
\partial_i(A)$.
\end{definition}
We now relate $M_i(F)$ to the upper edge borders in the $i$th
direction of $A(x^{a,b})$ and $B(x^{a,b})$.
\begin{lemma}
For any $1 \leq i \leq n$, we have:
\begin{equation}\label{Eq:New-1}
M_i(f) \ge \frac{1}{6} \cdot 3^{-n} \cdot \mathbb{E}_{x \in
(L_3)^n} \left[ |\partial_i A(x^{a,b})| + |\partial_i B(x^{a,b})|
\right].
\end{equation}
\end{lemma}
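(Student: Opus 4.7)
The plan is to injectively map edges in $\partial_i A(z^{a,b}) \cup \partial_i B(z^{a,b})$ (for a fixed $z^{a,b}$) into profitable single-voter manipulations by voter $i$, and then normalize. Fix $z^{a,b} \in \{0,1\}^n$ and consider an edge $e = (v_{-i},v_i,v'_i)$ of $\partial_i A(z^{a,b})$, so $v_i < v'_i$, the profile $P_1$ at $v_i$ satisfies $F(P_1)=a$, and the profile $P_2$ at $v'_i$ satisfies $F(P_2)=y \in \{b,c\}$ (the case $e \in \partial_i B(z^{a,b})$ is symmetric, with $a$ replaced by $b$ and $y \in \{a,c\}$). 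I would associate to $e$ two candidate manipulations obtained by letting voter $i$ lie in each of the two possible directions: $\mathrm{Manip}_1(e) = (P_1, x'_i)$ where $x'_i$ is voter $i$'s preference at $P_2$, and $\mathrm{Manip}_2(e) = (P_2, x'_i)$ where $x'_i$ is voter $i$'s preference at $P_1$. The triple $(v_{-i},v_i,v'_i)$ and the direction of the lie are recoverable from either candidate, so different edges yield different (profile, $x'_i$) pairs and $\mathrm{Manip}_1$-pairs are never $\mathrm{Manip}_2$-pairs; furthermore, since $A(z^{a,b})$ and $B(z^{a,b})$ are disjoint, edges from $\partial_i A$ and $\partial_i B$ produce disjoint candidates.

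The main step, and the only nontrivial one, is to show that every edge has at least one profitable candidate. Along $e$, voter $i$'s preference between $a$ and $b$ is constant and equals $z^{a,b}_i$, while only the position of $c$ moves, and moreover $c$ only drops as $v_i$ increases. If $y=b$, the relative rank of $a$ and $b$ is fixed, so exactly one of the two directions yields an outcome voter $i$ prefers over the current one, and that direction is a profitable lie. If $y=c$, I would check the three edge-types $(v_i,v'_i)\in\{(0,1),(1,2),(0,2)\}$ against the two values of $z^{a,b}_i$. The useful invariant is that monotonicity of $c$'s position forces the pair of $(a\text{ vs }c)$-preferences at $(P_1,P_2)$ to be one of $(c\succ a,\,c\succ a)$, $(c\succ a,\,a\succ c)$, or $(a\succ c,\,a\succ c)$; the bad pattern $(a\succ c,\,c\succ a)$ cannot occur. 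In each allowed pattern, either $\mathrm{Manip}_1$ is profitable (when $c\succ a$ at $P_1$, so voter $i$ gains by flipping the outcome from $a$ to $c$) or $\mathrm{Manip}_2$ is profitable (when $a\succ c$ at $P_2$). The case $y=c$ for $e\in\partial_i B$ is identical after swapping $a\leftrightarrow b$.

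Putting this together, for each $z^{a,b}$ one obtains at least $|\partial_i A(z^{a,b})|+|\partial_i B(z^{a,b})|$ distinct (profile, $x'_i$) pairs that witness profitable manipulations by voter $i$. Since $M_i(F)$ is normalized by $|L_3|^{n+1}=6\cdot 2^n\cdot 3^n$, summing over $z^{a,b}$ and dividing yields
\[
M_i(F) \;\geq\; \frac{1}{6\cdot 3^n}\cdot\frac{1}{2^n}\sum_{z^{a,b}}\bigl(|\partial_i A(z^{a,b})|+|\partial_i B(z^{a,b})|\bigr),
\]
which equals the claimed inequality because the edge boundaries depend only on $x^{a,b}$. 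The main obstacle is the case analysis in paragraph two; in particular one must handle the non-adjacent edge type $(0,2)$, where $c$ jumps from top to bottom, but the monotonicity observation rules out the problematic preference pattern there as well.
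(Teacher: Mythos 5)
Your proposal is correct and follows essentially the same route as the paper: each directed boundary edge is matched to a profitable manipulation in one of its two directions (the paper's case split on whether the tail's voter prefers the new outcome $t$ to $a$ is just a reorganization of your $y=b$ / $y=c$ pattern analysis, with the same key observation that lowering $c$ preserves $a\succ c$), and the counting/normalization is identical. No gaps.
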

\begin{proof}
We compute a lower bound on $M_i(F)$ by choosing $x$ and $x'$ at
random, differing only (possibly) in the preferences of the $i$th
voter, and providing a lower bound on the probability that the
$i$th coordinate of $x'$ is a profitable manipulation of $x$. We
perform the random choice as follows: First we choose at random
$x^{a,b}_{-i} \in \{0,1\}^{n-1}$, $x^{a,b}_i \in \{0,1\}$, and
$x'^{a,b}_i \in \{0,1\}$. With probability $1/2$, we have
$x'^{a,b}_i = x^{a,b}_i$, and the rest of the analysis is
conditioned on this event indeed occurring (a conditioning that
does not affect the distribution chosen). We next choose $v_{-i}
\in \{0,1,2\}^{n-1}$, and finally we choose $v_i \in \{0,1,2\}$
and $v'_i \in \{0,1,2\}$.

We claim that if $(v_{-i},v_i,v'_i) \in \partial_i A$, then either
$x'_i$ is a manipulation of $x$ or $x_i$ is a manipulation of
$x'$.

Indeed, note that by the definition of $\partial_i A$, the
condition $(v_{-i},v_i,v'_i) \in \partial_i A$ implies that when
moving from $x_i$ to $x'_i$, voter $i$ lowered his relative
preference of $c$ without changing his ranking of the pair
$(a,b)$, and this changed the output of $F$ from $a$ to some other
result $t \in \{b,c\}$. We have two possible cases:
\begin{enumerate}
\item If, according to $x_i$, voter $i$ prefers $t$ to $a$, then
$x'_i$ is a manipulation of $x$.

\item If $x_i$ ranks $a$ above $t$, then this is definitely true
for $x'_i$ too, since when moving from $x_i$ to $x'_i$, $a$'s rank
relative to $b$ did not change, whereas it improved relative to
$c$. Thus, $x_i$ is a manipulation of $x'$.
\end{enumerate}
Thus, in both cases either $x'_i$ is a manipulation of $x$ or
$x_i$ is a manipulation of $x'$, as claimed.

The claim implies that every edge in $\partial_i A$ corresponds to
a different pair $(x,x')$ for which the $i$th coordinate of $x'$
is a profitable manipulation of $x$. Since each such edge is
chosen with probability $\frac{1}{2} \cdot 3^{-n} \cdot \frac 1
3$, the total contribution of such pairs to the lower bound on
$M_i(F)$ is $\frac 1 6 \cdot 3^{-n} \cdot \mathbb{E}_{x} [
|\partial_i A(x^{a,b})|.$ A similar contribution comes from the
case $(v_{-i},v_i,v'_i) \in \partial_i B$.
\end{proof}

Summing the two sides of Equation~(\ref{Eq:New-1}) over $i$, we
get:
\begin{equation}\label{Eq:New-2}
\sum_{i=1}^n M_i(F) \geq \frac {3^{-n}}{6} \cdot \mathbb{E}_{x \in
(L_3)^n} \left[ \left( |\partial A(x^{a,b})|+|\partial B(x^{a,b})|
\right) \right].
\end{equation}
Now we are ready to establish the relation between $\sum_i M_i(F)$
and $M^{a,b}(F)$. Recall that Equation~(\ref{Eq:Mab}) above
states:
\[
M^{a,b}(f) = \mathbb{E}_{x \in (L_3)^n} \left[ {{|A(x^{a,b})|}
\over {3^n}} \cdot {{|B(x^{a,b})|} \over {3^n}}\right].
\]
By combination of these two equations, the application of the
following proposition to the sets $A(x^{a,b})$ and $B(x^{a,b})$
yields the assertion of Lemma~\ref{lemma:first-reduction}.
\begin{proposition}
For every pair of disjoint sets $A,B \subset \{0,1,2\}^n$, we
have:
\[
|\partial(A)|+|\partial(B)| \ge 3^{-n}|A||B|.
\]
\end{proposition}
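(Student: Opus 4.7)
The plan is a path-counting argument combined with the Harris / FKG correlation inequality: I will construct a charging map $\phi : A \times B \to \partial A \cup \partial B$ and then bound the per-edge preimage sizes by $3^n$ on average, which yields $|A||B| \leq 3^n(|\partial A| + |\partial B|)$, i.e., the claim.

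To define $\phi$, take a pair $(a,b) \in A \times B$. Since $A \cap B = \emptyset$, the coordinatewise maximum $a \vee b$ cannot lie in both $A$ and $B$. Without loss of generality suppose $a \vee b \notin A$; I consider the monotone upward path from $a$ to $a \vee b$ that, at step $k$, replaces coordinate $k$ by $(a \vee b)_k$, for $k = 1, 2, \ldots, n$. Because $a \in A$ and $a \vee b \notin A$, this path must contain at least one edge of $\partial A$, and I set $\phi(a,b)$ to be the first such edge. Symmetrically, when $a \vee b \notin B$, I use the analogous path from $b$ to $a \vee b$ and produce an edge of $\partial B$; degenerate cases such as $a \leq b$ or $b \leq a$ are handled by swapping roles.

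The core combinatorial step is to bound $|\phi^{-1}(e)|$ for each boundary edge $e$. For $e = (u, v) \in \partial_i A$, a direct enumeration shows that any pair $(a, b)$ with $\phi(a, b) = e$ must satisfy $a_j = u_j$ for every $j \geq i$, $b_i = v_i$, and, for each $j < i$, the pair $(a_j, b_j)$ must attain $\max(a_j, b_j) = u_j$, giving $2 u_j + 1$ possibilities. This yields the deterministic multiplicity bound $\prod_{j<i}(2 u_j + 1) \cdot 3^{n-i}$, which can unfortunately be as large as $5^{i-1} \cdot 3^{n-i}$, exceeding $3^n$ when $i$ is close to $n$. The main obstacle, and the place where Harris' inequality is essential, is improving this crude bound to a uniform one of $3^n$ on average: by randomizing the coordinate ordering $\pi$ uniformly and applying Harris / FKG on the product measure along the path (equivalently, Maclaurin's inequality on the elementary symmetric means of the quantities $\{2 u_j + 1\}$), the averaged per-edge multiplicity is controlled by $3^n$, which upon summation over $e \in \partial A \cup \partial B$ yields the desired isoperimetric bound.
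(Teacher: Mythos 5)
Your charging-map skeleton is plausible, but the proof has a genuine gap exactly at the step you flag as ``the core combinatorial step,'' and the proposed fix does not work. For an edge $e=(u,v)\in\partial_i A$ with $u_j=2$ for all $j\ne i$, the quantity you would need to control after randomizing the coordinate order $\pi$ is $\mathbb{E}_\pi\bigl[\prod_{j\prec_\pi i}(2u_j+1)\cdot 3^{\#\{j\succ_\pi i\}}\bigr]=\frac1n\sum_{k=1}^n 5^{k-1}3^{n-k}\approx\frac{3}{2n}\cdot 5^{n-1}$, which is still exponentially larger than $3^n$. Maclaurin's inequality only bounds the normalized elementary symmetric means of $\{2u_j+1\}$ by powers of their \emph{arithmetic mean}, which here equals $5$, so it cannot deliver a $3^n$ bound; and Harris/FKG is a correlation inequality for monotone events under a product measure --- it is not ``equivalent'' to Maclaurin, and you have not exhibited any pair of monotone events to which it would apply in your setup. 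To rescue a canonical-path argument one would have to exploit the first-crossing condition and the membership constraints $(a,b)\in A\times B$ far more seriously than the crude enumeration does (this is roughly the direction taken, with substantial work, by Isaksson--Kindler--Mossel in a related setting); as written, the multiplicity bound is simply asserted.

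For comparison, the paper's proof sidesteps path counting entirely. One monotonizes $A$ and $B$ by upward shifting, one coordinate at a time, obtaining monotone sets $A',B'$ with $|A'|=|A|$, $|B'|=|B|$, and $|A'\setminus A|\le|\partial A|$, $|B'\setminus B|\le|\partial B|$ (each element added in step $i$ is charged to one or two disjoint edges of $\partial_i$, and shifting does not increase boundaries). Harris' inequality then applies to the two monotone sets to give $|A'\cap B'|\ge|A||B|/3^n$, and disjointness of $A$ and $B$ forces $A'\cap B'\subseteq(A'\setminus A)\cup(B'\setminus B)$, which yields the claim. This is where Harris/FKG genuinely enters: applied to the shifted sets, not along paths.
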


\begin{proof}
We start by ``shifting'' both $A$ and $B$ upward, using a standard
monotonization technique (see, e.g.,~\cite{Fra87}). The shifting
is performed by a process of $n$ steps. We denote $A_0=A$, and for
each $i = 1,\ldots,n$, at step $i$ we replace $A_{i-1}$ by a set
$A_i$ of the same size that is monotone in the $i$'th coordinate
(which means that if $v \in A_i$ and $v'_i \ge v_i$ then
$(v_{-i},v'_i) \in A_i$). This is done by moving every $v$ with
$v_i<2$ to have $v_i=2$ if the obtained element is not already in
$A$, and then moving every $v$ that remained with $v_i=0$ to have
$v_i=1$ if the obtained element is not already in $A$. Clearly
such steps do not change the size of the set, and thus $|A_i|=|A|$
for all $i$. As usual in such operations, it is not hard to check
that the step operation does not increase $\partial_j A$ for any
$j$, and in particular, does not destroy the monotonicity in
previous indices (see, e.g.,~\cite{Fra87} for similar arguments).
Hence, the sequence $|\partial_j(A_i)|$ is monotone decreasing in
$i$ for all $j$.

Let $A'$ and $B'$ be the sets we obtain after all $n$ steps. We
claim that $A' \setminus A$, the set of ``new'' elements added in
the monotonization process, satisfies
\begin{equation}\label{Eq:New-3}
|A' \setminus A| \le |\partial(A)|.
\end{equation}
Indeed, it is clear that every new element added in the $i$th step
of the monotonization corresponds to either one or two edges in
$\partial_i(A_{i-1})$ and these edges are disjoint. Thus, denoting
by $m_i$ the number of new elements added in the $i$th step, we
get by the monotonicity of the sequence $|\partial_j(A_i)|$, that:
\[
|A' \setminus A| \leq \sum_{i=1}^n m_i \leq \sum_{i=1}^n
|\partial_i(A_{i-1})| \leq \sum_{i=1}^n |\partial_i(A)| =
|\partial(A)|.
\]
Similarly, we have
\begin{equation}\label{Eq:New-4}
|B' \setminus B| \le |\partial(B)|.
\end{equation}
Since both $A'$ and $B'$ are monotone in the partial order of the
lattice $\{0,1,2\}^n$, they are ``positively correlated'', by
Harris' theorem~\cite{Ha60}, or by its better known
generalization, the FKG inequality~\cite{FKG71}. This means that
\[
|A' \cap B'|/3^n \ge |A'|/3^n \cdot |B'|/3^n = |A|/3^n \cdot |B|/3^n.
\]
However, by assumption $A$ and $B$ are disjoint and thus $A' \cap
B' \subseteq (A'\setminus A) \cup (B' \setminus B)$. Therefore, by
Equations~(\ref{Eq:New-3}) and~(\ref{Eq:New-4}), we have:
\[
|\partial(A)|+|\partial(B)| \geq |(A'\setminus A) \cup (B'
\setminus B)| \geq |A' \cap B'| \geq |A| \cdot |B|/3^n.
\]
This completes the proof of the proposition, and thus also of
Lemma~\ref{lemma:first-reduction}.
\end{proof}

\section{Reduction from an SCF with Low Dependence on Irrelevant
Alternatives to an Almost Transitive GSWF}
\label{sec:second-reduction}

In this section we present a reduction which allows to pass from
an SCF with low dependence on irrelevant alternatives to a GSWF
to which one can apply a quantitative version of Arrow's
impossibility theorem. In order to present the results, we need a
few more definitions related to GSWFs and to the quantitative
Arrow theorem.

Recall that a GSWF on $m$ alternatives is a function $G:(L_m)^n
\rightarrow \{0,1\}^{{m}\choose{2}}$ which is given the preference
orders of the voters, and outputs the preference of the society
amongst each pair $(a,b)$ of alternatives. The output preference
of $G$ between $a$ and $b$ for a given profile $x \in (L_m)^n$ is
denoted by $G^{a,b}(x) \in \{0,1\}$, where $G^{a,b}(x)=1$ if $a$
is preferred over $b$, and $G^{a,b}(x)=0$ is $b$ is preferred over
$a$. $G$ satisfies the IIA condition if for any pair $(a,b)$, the
function $G^{a,b}(x)$ depends only on the vector $x^{a,b} \in
\{0,1\}^n$ (which represents the preferences of the voters between
$a$ and $b$), and not on other alternatives.

As was shown by Condorcet in 1785, a GSWF based on the majority
rule amongst pairs of alternatives can result in a non-transitive
outcome, that is, a situation in which there exist alternatives
$a,b,c$, such that $a$ is preferred by the society over $b$, $b$
is preferred over $c$, and $c$ is preferred over $a$. The seminal
impossibility theorem of Arrow~\cite{Arr51,Arr63} asserts that
such non-transitivity occurs in any ``non-trivial'' GSWF on at
least three alternatives satisfying the IIA condition:
\begin{theorem}[Arrow]
Consider a GSWF $G$ with at least three alternatives. If the
following conditions are satisfied:
\begin{itemize}
\item The IIA condition,

\item Unanimity --- if all the members of the society prefer some
alternative $a$ over another alternative $b$, then $a$ is
preferred over $b$ in the outcome of $F$,

\item $F$ is not a dictatorship (that is, the preference of the
society is not determined by a single member),
\end{itemize}
then there exists a profile for which the outcome is
non-transitive.
\end{theorem}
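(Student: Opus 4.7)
The plan is to argue by contradiction: assume $G$ satisfies IIA and unanimity, is not a dictatorship, and produces a transitive social order on every profile, and derive a contradiction. The central device is the notion of a \emph{decisive} coalition: a set $S$ of voters is decisive for the ordered pair $(a,b)$ if $G^{a,b}(x)=1$ whenever every voter in $S$ ranks $a$ above $b$. By unanimity, the whole electorate is decisive for every ordered pair, so the family of decisive coalitions is nonempty, and by finiteness there is a decisive coalition of minimal size. The goal is to show that this minimal coalition is a singleton, yielding a dictator.

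The argument rests on two lemmas. The first is \emph{field expansion}: if $S$ is decisive for some ordered pair $(a,b)$, then $S$ is decisive for every ordered pair. I would prove this with the standard ``third alternative'' trick. To show, for instance, that $S$ is decisive for $(a,c)$, construct a profile in which every voter in $S$ ranks $a \succ b \succ c$ and every voter outside $S$ puts $b$ at the top and otherwise ranks $a \succ c$. Decisiveness of $S$ for $(a,b)$ yields the social preference $a \succ b$; unanimity on $(b,c)$ yields $b \succ c$; transitivity then forces $a \succ c$ socially. Since the pattern of individual $a$-versus-$c$ preferences only required that voters in $S$ rank $a$ above $c$, IIA implies that whenever voters in $S$ unanimously prefer $a$ to $c$, the society does too, regardless of the other voters. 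Symmetric choices of the auxiliary alternative cover the remaining ordered pairs.

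The second lemma is \emph{group contraction}: if $S$ is decisive and $|S| \ge 2$, then some proper nonempty subset of $S$ is decisive. Partition $S = S_1 \cup S_2$ nontrivially, pick three alternatives $a,b,c$, and consider a profile in which the voters in $S_1$ rank $a \succ b \succ c$, the voters in $S_2$ rank $c \succ a \succ b$, and the outsiders rank $b \succ c \succ a$. Decisiveness of $S$ for $(a,b)$ gives the social preference $a \succ b$. If the society also prefers $a$ to $c$, then only the voters in $S_1$ preferred $a$ to $c$, so by IIA together with field expansion $S_1$ is decisive. Otherwise the society prefers $c$ to $a$; combined with $a \succ b$ and transitivity this yields $c \succ b$ socially, but only the voters in $S_2$ preferred $c$ to $b$, so $S_2$ is decisive by the same reasoning. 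Iterating contraction starting from the grand coalition produces a singleton decisive coalition, which by field expansion is decisive for every ordered pair, and hence a dictator, contradicting the non-dictatorship hypothesis.

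The main delicate point throughout is engineering the auxiliary profiles so that IIA can detach the conclusion from the third alternative used only to invoke transitivity; once the semidecisive-to-decisive step is in place via field expansion, the rest is bookkeeping on how decisiveness propagates under the partial order of coalitions.
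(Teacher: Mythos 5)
The paper does not prove this statement: it is Arrow's classical impossibility theorem, cited from the original sources and used later only through its quantitative refinements, so your argument has to stand on its own. The route you chose --- decisive coalitions, field expansion, group contraction applied to a minimal decisive set --- is the standard and correct one, but two linked defects in the execution leave genuine gaps.

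First, the field expansion profile is mis-engineered. You have every voter outside $S$ rank $b \succ a \succ c$, so in that auxiliary profile \emph{every} voter ranks $a$ above $c$. The IIA step then only tells you that society prefers $a$ to $c$ whenever all voters do, which is just unanimity again; the claim that ``the pattern of individual $a$-versus-$c$ preferences only required that voters in $S$ rank $a$ above $c$'' is false for the profile as written. The standard fix is to leave the outsiders' relative ranking of $a$ and $c$ unconstrained: they put $b$ on top and order $a$ and $c$ arbitrarily. The chain $a \succ b$ (decisiveness of $S$), $b \succ c$ (unanimity), hence $a \succ c$ (transitivity) goes through for every such choice, and only then does IIA deliver full decisiveness of $S$ for $(a,c)$, independent of the outsiders.

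Second, you state field expansion with the hypothesis that $S$ is \emph{decisive} for some pair, but group contraction only delivers \emph{semi}decisiveness: in your contraction profile, the conclusion that society prefers $a$ to $c$ is obtained in a situation where every voter outside $S_1$ prefers $c$ to $a$, so IIA yields decisiveness of $S_1$ for $(a,c)$ only conditional on unanimous opposition outside $S_1$. As stated, your field expansion lemma cannot be applied to $S_1$. You need the lemma in the stronger form ``semidecisive for one ordered pair implies fully decisive for all ordered pairs,'' which is exactly what the corrected auxiliary profile provides (its proof only ever uses that the outsiders oppose $S$ on the source pair $(a,b)$). With these two repairs the argument closes correctly; the remaining steps (nonemptiness of the minimal decisive set, covering all ordered pairs when $m>3$, and reading off a dictator from a decisive singleton) are routine as you indicate.
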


Since we would like to use quantitative versions of Arrow's
theorem on three alternatives, we use the following notation:
\begin{notation}
For a GSWF $G$ on three alternatives, let
\[
NT(G) = \Pr_{x \in (L_3)^{n}} [G(x) \mbox{ is non-transitive }].
\]
\end{notation}

The family of all GSWFs on three alternatives satisfying the IIA
condition whose output is always transitive (i.e., those trivial
GSWFs for which the conclusion of Arrow's theorem does not apply)
was partially characterized by Wilson~\cite{Wil72}, and fully
characterized by Mossel~\cite{Mos09}. It consists exactly of all
the dictatorships and the anti-dictatorships (i.e., GSWFs whose
output is either the preference order of a single voter or its
inverse), and the GSWFs which rank a fixed alternative always at
the top (or always at the bottom). (See Theorem~\ref{Thm:Mossel3}
for the exact formulation.) Clearly, all such GSWFs are
undesirable from the point of view of Social choice theory, and
one may assume that a reasonable GSWF is ``far'' from being
contained in this set. To quantify this notion, we denote
\[
TR_3=\{\mbox{ All GSWFs on 3 alternatives which satisfy IIA and
are always transitive }\},
\]
and for a GSWF $G$ on three alternatives, denote by
\[
Dist(G,TR_3)
\]
the minimal fraction of output values that should be changed in
order to make $G$ always transitive, while maintaining the IIA
condition. The quantitative versions of Arrow's theorem which we
use in the next section assert that if $NT(G)$ is small (i.e., $G$
is almost transitive), then $Dist(G,TR_3)$ must be small as well
(and thus, $G$ is close to the family of ``bad'' GSWFs).

Another definition that will be used in the proof is the following:
\begin{definition}
For a GSWF $G$ on $m$ alternatives, and a profile $x \in (L_m)^n$,
we say that an alternative $a$ is a Generalized Condorcet Winner
(GCW) at profile $x$ if for any alternative $b \neq a$, we have
$G^{a,b}(x)=1$. A Generalized Condorcet Loser (GCL) is defined
similarly.
\end{definition}

Now we are ready to present our result.
\begin{lemma}\label{lemma:second-reduction}
Let $\epsilon_1,\epsilon_2>0$, and let $F$ be an SCF on three
alternatives, such that:
\begin{itemize}
\item $M^{a,b}(F) \leq \epsilon_1$ for all pairs $(a,b)$.

\item $F$ is at least $\epsilon_2$-far from a dictatorship and
from an anti-dictatorship (i.e., an SCF which always outputs the
bottom choice of a fixed voter).

\item $F$ is at least $\epsilon_2$-far from breaching non-imposition.
That is, for each alternative $a$, $\Pr_{x \in (L_3)^n} [F(x)=a] \geq
\epsilon_2$.
\end{itemize}
Then one can construct a GSWF $G$ on three alternatives, such
that:
\begin{enumerate}
\item $G$ satisfies the IIA condition.

\item $Dist(G,TR_3) \geq \epsilon_2 - 3 \sqrt{\epsilon_1}$.

\item $NT(G) \leq 3 \sqrt{\epsilon_1}$.
\end{enumerate}
\end{lemma}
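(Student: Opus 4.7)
My plan is to construct $G$ directly from $F$ in the most natural way. For each unordered pair $\{a,b\}$ and each $y\in\{0,1\}^n$, set
\[
G^{a,b}(y)=1 \iff \Pr[F(x)=a \mid x^{a,b}=y]\;\ge\; \Pr[F(x)=b\mid x^{a,b}=y],
\]
where $x$ is uniform on the fiber $\{x:x^{a,b}=y\}$ (ties broken arbitrarily). Writing $p_a(y),p_b(y)$ for these conditional probabilities, the definition makes (1) (IIA) automatic, and by (\ref{Eq:Mab}) the hypothesis becomes $M^{a,b}(F)=\mathbb{E}_y[p_a(y)p_b(y)]\le\epsilon_1$.

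For (3), I would introduce for each ordered pair $a\neq b$ the bad event $E^{a,b}=\{F(x)=a,\;G^{a,b}(x^{a,b})=0\}$. On $E^{a,b}$ we have $p_a(x^{a,b})\le p_b(x^{a,b})$, so $E^{a,b}$ and $E^{b,a}$ are disjoint and
\[
\Pr[E^{a,b}]+\Pr[E^{b,a}]=\mathbb{E}_y[\min(p_a(y),p_b(y))]\le\mathbb{E}_y[\sqrt{p_a(y)p_b(y)}]\le\sqrt{M^{a,b}(F)}\le\sqrt{\epsilon_1},
\]
using $\min(s,t)\le\sqrt{st}$ followed by Jensen. A non-transitive output of $G$ at $x$ is one of the two $3$-cycles, and in each cycle the $F$-winner $a=F(x)$ is beaten by $G$ against at least one of the other two alternatives, so some $E^{a,\cdot}$ holds. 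Summing over $a$ gives the six ordered pairs, which group into the three unordered pairs, yielding $NT(G)\le 3\sqrt{\epsilon_1}$.

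For (2), fix $G'\in TR_3$; by the Wilson/Mossel characterization $G'$ is a dictator, an anti-dictator, or an ``always rank $a$ on top/bottom'' GSWF. In each case the induced SCF $F'(x):=\mathrm{top}(G'(x))$ is, respectively, a dictatorship SCF, an anti-dictatorship SCF (the bottom choice of the same voter), or an SCF whose range omits a fixed alternative; the three hypotheses on $F$ therefore guarantee $Dist(F,F')\ge\epsilon_2$ in every case. Now let $A:=\{F(x)\neq\mathrm{top}(G(x))\}$ (undefined, hence unequal, when $G(x)$ is non-transitive) and $B:=\{\mathrm{top}(G(x))\neq\mathrm{top}(G'(x))\}$. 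Exactly the argument that bounded $NT$ also gives $\Pr[A]\le 3\sqrt{\epsilon_1}$, while in the transitive case a differing top forces $G$ and $G'$ to disagree on at least the top-vs-runner-up pair, so $\Pr[B]\le NT(G)+Dist(G,G')$. Since $\{F\neq F'\}\subseteq A\cup B$ and $A\cap B\supseteq\{G\text{ non-transitive}\}$, inclusion-exclusion gives
\[
Dist(F,F')\le\Pr[A]+\Pr[B]-NT(G)\le 3\sqrt{\epsilon_1}+Dist(G,G'),
\]
so $Dist(G,G')\ge\epsilon_2-3\sqrt{\epsilon_1}$, and infimising over $G'\in TR_3$ finishes (2).

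The main technical difficulty is the distance bound: one must verify that the Wilson/Mossel classification of $TR_3$ is matched exactly by the three exceptions listed in the lemma's hypotheses (dictator, anti-dictator, and missing-range), and, more delicately, one has to avoid losing an extra factor of $\sqrt{\epsilon_1}$ by noticing that the non-transitive event lies inside $A\cap B$ and must be subtracted out via inclusion-exclusion; otherwise a careless union bound only yields $\epsilon_2-6\sqrt{\epsilon_1}$. The $NT$ estimate, by contrast, is a clean one-line Cauchy-Schwarz once the events $E^{a,b}$ have been identified as the right combinatorial objects.
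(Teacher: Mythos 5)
Your construction of $G$, the events $E^{a,b}$ (the paper's ``minority preferences''), and the $\min(p_a,p_b)\le\sqrt{p_ap_b}$ plus Jensen/Cauchy--Schwarz bound are exactly the paper's argument, and your treatment of assertion~2 is the paper's four-case analysis over the Wilson--Mossel characterization of $TR_3$ repackaged as a single uniform inequality $Dist(F,\mathrm{top}\circ G')\le Dist(G,G')+3\sqrt{\epsilon_1}$ (the paper gets the same constant directly by bounding $\Pr[F(x)=\cdot]$ in each case, so no inclusion--exclusion is actually needed to avoid a factor-of-two loss). The proposal is correct and essentially the same as the paper's proof.
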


\begin{proof}
Given $F$, we define the GSWF $G$ as follows:
\begin{definition}
For each pair of alternatives $a,b$, and a profile $x \in
(L_3)^n$, we set $G^{a,b}(x)=1$ if
\[
\Pr_{x'} [F(x')=a \ | \ x'^{a,b} = x^{a,b}] > \Pr_{x'} [F(x')=b \
| \ x'^{a,b} = x^{a,b}],
\]
and $G^{a,b}(x)=0$ if the reverse inequality holds. In the case of
equality we break the tie according to the preference of some
fixed voter between $a$ and $b$.
\end{definition}
Intuitively, $G^{a,b}(x)$ considers all profiles $x'$ which agree
with $x$ on the preferences of the voters between $a$ and $b$, and
checks whether $F(x')=a$ occurs more often then $F(x')=b$ or the
opposite, while ignoring all cases where $F(x')$ equals some other
alternative. It is clear from the definition that $G$ satisfies
the IIA condition, and that if $F$ is neutral (i.e., invariant
under permutation of the alternatives), then $G$ is neutral as
well.

In order to analyze $G$, we introduce an auxiliary definition:
\begin{definition}\label{maj}
A profile $x \in (L_3)^n$ is called a {\em minority preference on
the pair of alternatives $(a,b)$} if $F(x)=a$ while
$G^{a,b}(x)=0$, or if $F(x)=b$ while $F^{a,b}(x)=1$. $x$ is called
a {\em minority preference} if it is a minority preference for at
least some pair $(a,b)$. For a fixed pair of alternatives $a,b$,
denote
\[
N^{a,b}(F)=\Pr_{x \in (L_3)^n} [x \: \mbox{is a minority
preference on $(a,b)$}].
\]
\end{definition}

It is easy to relate $N^{a,b}$ to $M^{a,b}$, using the
Cauchy-Schwarz inequality:

\begin{proposition}\label{mstarn}
For every SCF $F$ and every pair of alternatives $a,b$ we have
\[
M^{a,b}(F) \ge (N^{a,b}(F))^2.
\]
\end{proposition}
\begin{proof}
Given $F,a,b$, and a vector $x^{a,b} \in \{0,1\}^n$ representing
the preferences of the voters between $a$ and $b$, define
\[
p_a(x^{a,b}) = \Pr_{x'} [F(x')=a \ | \ x'^{a,b} = x^{a,b}]
\]
and
\[
p_b(x^{a,b}) = \Pr_{x'} [F(x')=b \ | \ x'^{a,b} = x^{a,b}].
\]
In these terms,
\[
M^{a,b}(F)=\mathbb{E}_{x^{a,b} \in \{0,1\}^n} [p_a(x^{a,b}) \cdot
p_b(x^{a,b})],
\]
while
\[
N^{a,b}(F) = \mathbb{E}_{x^{a,b} \in \{0,1\}^n} [\min
\{p_a(x^{a,b}),p_b(x^{a,b})\}].
\]
Thus, by the Cauchy-Schwarz inequality,
\[
M^{a,b}(F)=\mathbb{E} [p_a \cdot p_b] \ge
\mathbb{E}[(\min\{p_a,p_b\})^2] \ge (\mathbb{E}
[\min\{p_a,p_b\}])^2=(N^{a,b}(F))^2,
\]
as asserted.
\end{proof}

We are now ready to prove that $G$ satisfies the desired
properties.

Consider a profile $x$ that is not a minority preference and
denote $a=F(x)$. Note that by the definition of a minority
preference, for all $b$ we must have that $G^{a,b}(x)=1$ and thus,
$a$ is a Generalized Condorcet Winner of $G$ at $x$.

This immediately implies that $G$ satisfies Assertion~3 of the lemma.
Indeed,
\begin{align*}
NT(G) &= \Pr_x[\mbox{ G does not have a GCW at x }]
\\
&\leq \Pr[\mbox{ x is a minority preference of F}] \\
&\leq \sum_{a,b} N^{a,b}(F) \leq \sum_{a,b} \sqrt{M^{a,b}(F)} \leq
3 \sqrt{\epsilon_1},
\end{align*}
as asserted.

In order to prove Assertion~2, let $Dist(G,TR_3)=\epsilon$, and
let $H \in TR_3$ be such that $G$ can be transformed to $H$ by
changing only fraction $\epsilon$ of the values. We consider four
cases:
\begin{enumerate}
\item \textbf{Case~1: $H$ always ranks alternative $a$ at the
top.} In this case, $\Pr[\mbox{ a is a GCW of G }] \geq
1-\epsilon$. Note that by the argument above, if $x$ is not a
minority preference and $a$ is a $GCW$ of $G$ at $x$ then
$F(x)=a$. Hence,
\[
\Pr[F(x)=a] \geq (1-\epsilon) - \Pr[\mbox{ x is a minority
preference of F }] \geq 1-\epsilon- 3 \sqrt{\epsilon_1}.
\]
However, by the assumption,
\[
\Pr[F(x)=a] \leq 1- \Pr[F(x)=b] \leq 1- \epsilon_2,
\]
and thus, $\epsilon \geq \epsilon_2 - 3\sqrt{\epsilon_1}$, as
asserted.

\item \textbf{Case~2: $H$ always ranks alternative $a$ at the
bottom.} In this case, $\Pr[\mbox{ a is a GCL of G }] \geq
1-\epsilon$. As in the previous case, if $x$ is not a minority
preference and $a$ is a $GCL$ of $G$ at $x$ then $F(x) \neq a$.
Thus,
\[
\Pr[F(x)=a] \leq \epsilon+3 \sqrt{\epsilon_1}.
\]
However, by assumption we have $\Pr[F(x)=a] \geq \epsilon_2$, and
thus $\epsilon \geq \epsilon_2 - 3 \sqrt{\epsilon_1}$, as
asserted.

\item \textbf{Case~3:$H$ is a dictatorship of voter $i$.} For a
profile $x$, denote the top alternative in the preference order of
voter $i$ by $x_i^{top}$. We have
\[
\Pr[ x_i^{top} \mbox{ is a GCW of G at x }] \geq 1-\epsilon.
\]
As in the previous cases, this implies that
\[
\Pr[F(x)=x_i^{top}] \geq 1-\epsilon - 3\sqrt{\epsilon_1}.
\]
However, since by assumption, $F$ is at least $\epsilon_2$-far
from a dictatorship of voter $i$, we have $\epsilon +
3\sqrt{\epsilon_1} \geq \epsilon_2$, and the assertion follows.

\item \textbf{Case~4:$H$ is an anti-dictatorship of voter $i$.} By
the same argument as in the previous case, if $x_i^{bot}$ is the
bottom alternative in the preference order of voter $i$, then
\[
\Pr[F(x)=x_i^{bot}] \geq 1-\epsilon - 3\sqrt{\epsilon_1}.
\]
However, since $F$ is also at least $\epsilon_2$-far from
anti-dictatorship of voter $i$, the assertion follows.
\end{enumerate}
This completes the proof of Condition~2 and of
Lemma~\ref{lemma:second-reduction}.
\end{proof}

\begin{rem}
We note that a certain converse of
Lemma~\ref{lemma:second-reduction} is true as well. If we have a
GSWF $G$ satisfying the IIA condition such that $\Pr[\mbox{ G has
a GCW }] \geq 1-\epsilon$, then we can define an SCF $F$ to be
equal to the GCW of $G(x)$ if such GCW exists, and to the top
choice of a fixed voter if the GCW does not exist. Since $G$
satisfies the IIA condition, the event $F(x)=a$ and $F(x')=b$ with
$x^{a,b}=x'^{a,b}$ can occur only if either $G(x)$ or $G(x')$ does
not have a GCW, and thus, $M^{a,b}(F) \le 2 \epsilon$.
\end{rem}

\section{Application of a Quantitative Arrow Theorem}
\label{sec:quantitative-arrow}

The only ingredient required for concluding the proof of
Theorem~\ref{t:mt} is a quantitative version of Arrow's
impossibility theorem. In order to get the optimal result for
various assumptions on the SCF $F$, we use two such versions, due
to Kalai~\cite{Ka02}, and to Keller~\cite{Kel10}.
\begin{theorem}\label{Thm:Quant-Arrow}
Let $G$ be a GSWF on three alternatives which satisfies the IIA
condition. Then:
\begin{enumerate}
\item If $Dist(G,TR_3) \geq \epsilon$, then $NT(G) \geq C_1 \cdot
\epsilon^3$, where $C_1$ is a universal constant.~\cite{Kel10}

\item If, in addition, $G$ is neutral and is at least
$\epsilon$-far from a dictatorship and an anti-dictatorship, then
$NT(G) \geq C_2 \cdot \epsilon$, where $C_2$ is a universal
constant.~\cite{Ka02}
\end{enumerate}
\end{theorem}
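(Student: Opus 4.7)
The plan is to encode $G$ by its three pairwise ``choice functions'' and exploit the special structure of the joint distribution on triples of pairwise preferences. Write $\{-1,+1\}$ for preferences, and let $f_{ab},f_{bc},f_{ca}\colon\{-1,+1\}^n\to\{-1,+1\}$ be defined by IIA so that $f_{ab}(x^{ab})$ records society's preference between $a$ and $b$, and similarly for the other two pairs. Under the impartial culture, each coordinate $(X^{ab}_i,X^{bc}_i,X^{ca}_i)$ is uniform on the six non-cyclic vertices of $\{-1,+1\}^3$, which gives the key moment $\mathbb{E}[X^{ab}_i X^{bc}_i]=\mathbb{E}[X^{bc}_i X^{ca}_i]=\mathbb{E}[X^{ca}_i X^{ab}_i]=-1/3$, with the marginals uniform. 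A paradox is precisely the event $\{f_{ab}=f_{bc}=f_{ca}\}$ (the all-$+1$ and all-$-1$ cases are the two forbidden cycles), so
\[
NT(G)=\tfrac{1}{4}\bigl(1+\mathbb{E}[f_{ab}f_{bc}]+\mathbb{E}[f_{bc}f_{ca}]+\mathbb{E}[f_{ca}f_{ab}]\bigr).
\]

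Next I would expand each correlation via Fourier analysis. Since each pair of coordinate sequences is i.i.d.\ with per-coordinate correlation $-1/3$, standard calculus on the discrete cube gives $\mathbb{E}[f_{ab}f_{bc}]=\sum_S \hat{f}_{ab}(S)\hat{f}_{bc}(S)(-1/3)^{|S|}$, and likewise for the other two pairs. Hence
\[
NT(G)=\tfrac{1}{4}+\tfrac{1}{4}\sum_{S}(-1/3)^{|S|}\bigl[\hat{f}_{ab}(S)\hat{f}_{bc}(S)+\hat{f}_{bc}(S)\hat{f}_{ca}(S)+\hat{f}_{ca}(S)\hat{f}_{ab}(S)\bigr].
\]
For part~2 (neutral case), neutrality forces $f_{ab},f_{bc},f_{ca}$ to be the same odd Boolean function $f$, so the expression collapses to $\tfrac{1}{4}+\tfrac{3}{4}\sum_S(-1/3)^{|S|}\hat{f}(S)^2$, with all even-degree mass vanishing. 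Using $\sum_S \hat{f}(S)^2=1$ together with $(-1/3)^{|S|}\ge -1$, one checks that $NT(G)$ is small precisely when the Fourier mass of $f$ is concentrated on $|S|=1$. A Fourier stability result (FKN-style) then shows $f$ is $O(NT(G))$-close to a single linear character $\pm x_i$, i.e.\ a dictator or anti-dictator, yielding the linear bound $NT(G)\ge C_2\epsilon$.

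For part~1 (general case), the three functions need not coincide, so the analysis must control the cross terms $\hat{f}_{ab}(S)\hat{f}_{bc}(S)$ etc. The plan is to first show, via the same Fourier identity combined with a Cauchy--Schwarz/Parseval argument and the constraint $\sum_S \hat f_\ast(S)^2=1$, that small $NT(G)$ forces each of the three functions to have almost all of its Fourier mass at levels $0$ and $1$. One then invokes a hypercontractive/junta argument (Bonami--Beckner together with an FKN-type quantitative stability theorem for nearly-linear Boolean functions) to conclude that each $f_\ast$ is close to a dictatorship, an anti-dictatorship, or a constant. A case analysis (matching which of these three options each pair chooses, and ensuring the triple is globally transitive, i.e.\ lies in $TR_3$) then exhibits an explicit $H\in TR_3$ within distance $O(NT(G)^{1/3})$ of $G$, which rearranges to $NT(G)\ge C_1 \mathrm{Dist}(G,TR_3)^3$.

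The main obstacle is the general (non-neutral) bound: losing constants in Cauchy--Schwarz and in the hypercontractive stability step naturally produces a cube-root, hence the cubic dependence $\epsilon^3$, and tightening this would require a much sharper joint stability theorem for three correlated Boolean functions rather than one. The neutral case is comparatively painless because symmetry makes the Fourier identity a quadratic form in a single function, so one FKN-type argument (without case analysis or cross-term slack) already yields the linear bound.
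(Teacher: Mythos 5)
This theorem is not proved in the paper at all: part~1 is imported from Keller~\cite{Kel10} and part~2 from Kalai~\cite{Ka02}, and the authors explicitly flag those proofs as complex, relying on Fourier analysis and hypercontractivity. Your setup --- reducing $NT(G)$ to $\tfrac14\bigl(1+\mathbb{E}[f_{ab}f_{bc}]+\mathbb{E}[f_{bc}f_{ca}]+\mathbb{E}[f_{ca}f_{ab}]\bigr)$ with per-coordinate correlation $-1/3$, collapsing to a single odd function in the neutral case, and finishing with an FKN-type stability theorem --- is exactly Kalai's argument for part~2, and that half of your sketch is sound. One quantitative quibble: the bound $(-1/3)^{|S|}\ge -1$ gives nothing; you must split level $1$ (coefficient $-1/3$) from levels $\ge 3$ (coefficient $\ge -1/27$) to obtain $NT(G)\ge \tfrac29\bigl(1-\sum_{|S|=1}\hat f(S)^2\bigr)$ before invoking FKN.

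Part~1, however, has a genuine gap. Your key intermediate claim --- that small $NT(G)$ forces each of $f_{ab},f_{bc},f_{ca}$ individually to have almost all Fourier mass on levels $0$ and $1$, hence each is close to a dictator, anti-dictator, or constant --- is false. Take $f_{ab}\equiv +1$ and $f_{ca}\equiv -1$ (so $a$ is always ranked on top) and $f_{bc}=\mathrm{Maj}$: then $NT(G)=0$, yet majority has a constant fraction of its Fourier weight on levels $\ge 3$ and is far from every dictator, anti-dictator, and constant. This is not a pathology but precisely the structure of $TR_3$ in Mossel's characterization (Theorem~\ref{Thm:Mossel3}): once one alternative is pinned to the top or bottom, the remaining pairwise function is completely unconstrained. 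A correct proof of part~1 must therefore first decide whether the near-transitivity of $G$ is explained by two of the three functions being nearly constant in a compatible pattern, and only in the residual cases force near-dictatorship; this is where the proofs of Mossel and Keller need reverse hypercontractivity (to lower-bound the probability that two non-negligible events on negatively correlated inputs occur simultaneously) together with a delicate case analysis, neither of which appears in your sketch. The heuristic that the exponent $3$ arises from ``losing constants in Cauchy--Schwarz'' is likewise not where the $\epsilon^3$ in \cite{Kel10} comes from.
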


Now we are ready to present the proof of Theorem~\ref{t:mt}.

\begin{proof}
Let $F$ be an SCF on three alternatives, and assume on the
contrary that:
\begin{itemize}
\item The distance of $F$ from a dictatorship is at least
$\epsilon$,\footnote{We note that there is no need to add the
condition that $F$ is far from an anti-dictatorship, since an SCF
which is close to an anti-dictatorship can be clearly manipulated
by the ``anti-dictator''.}

\item For each alternative $a$, $\Pr[F(x)=a] \geq \epsilon$, but

\item $\sum_i M_i(F) < C_0 \cdot \epsilon^6$ (where $C_0$ is a
constant that will be specified below).
\end{itemize}

By Lemma~\ref{lemma:first-reduction}, it follows that for each
pair of alternatives $a,b$, we have $M^{a,b}(F) < 6C_0 \cdot
\epsilon^6$. By Lemma~\ref{lemma:second-reduction}, it then
follows that there exists a GSWF $G$ on three alternatives which
satisfies the IIA condition, and
\begin{itemize}
\item $Dist(G,TR_3) \geq \epsilon - 3 \sqrt{6C_0 \epsilon^6} \geq
\epsilon/2$. (The second inequality holds for $C_0$ sufficiently
small.)

\item $NT(G) < 3 \sqrt{6C_0 \epsilon^6} = 3\sqrt{6C_0} \cdot
\epsilon^3$.
\end{itemize}
However, for $C_0$ small enough (concretely, $C_0 \leq C_1^2/3456$
where $C_1$ is the constant in Theorem~\ref{Thm:Quant-Arrow}),
this contradicts the first version of
Theorem~\ref{Thm:Quant-Arrow} above. This proves the first
assertion of Theorem~\ref{t:mt}. The second assertion follows
similarly using the second version of
Theorem~\ref{Thm:Quant-Arrow} instead of the first one (note that
by the construction of $G$, if $F$ is neutral then $G$ is neutral
as well and thus Kalai's version of the quantitative Arrow theorem
can be applied). This completes the proof of Theorem~\ref{t:mt}.
\end{proof}

\begin{rem}\label{Rem:Const}
Since the value of the constant $C_1$ in the first version of
Theorem~\ref{Thm:Quant-Arrow} is extremely low (i.e., of order
$\exp(2^{-10,000,000})$ ), for certain values of $n$ and
$\epsilon$, a better result can be obtained by using another
version of the quantitative Arrow theorem. In that version,
obtained by Mossel~\cite{Mos09}, the lower bound $C \cdot
\epsilon^3$ is replaced by $(1/36000) \cdot \epsilon^3 n^{-3}$.
Applying Mossel's theorem instead of the version we used above, we
get the lower bound
\[
\sum_i M_i(F) \geq C' \cdot \epsilon^6 / n^6,
\]
where $C' \approx 10^{-8}$. While this bound depends also on $n$,
for certain values of the parameters it is still stronger, due to
the bigger value of the constant.
\end{rem}

\section{SCFs with More than Three Alternatives}
\label{sec:more-alternatives}

In this section we consider SCFs with more than three
alternatives. We show that the second step of our proof (reduction
from an SCF with low dependence on irrelevant alternatives to an
almost transitive GSWF) can be generalized to SCFs on $m$
alternatives, and that the third step (application of a
quantitative Arrow theorem) can be generalized under an additional
assumption of neutrality. However, we weren't able to generalize
the first step (reduction from low manipulation power to low
dependence on irrelevant alternatives), and thus we do not obtain
any variant of the main theorem for more than three alternatives.

We would like to mention again two related follow-up works. Xia
and Conitzer \cite{XC08} showed that the first step of our proof
can be generalized to any constant number of alternatives under
several additional assumptions (see
Section~\ref{sec:related-work}). Furthermore, Isaksson et
al.~\cite{IKM10} obtained a quantitative Gibbard-Satterthwaite
theorem for any number of alternatives under a single additional
assumption of neutrality, using a different technique.

Despite these two works, we decided to present the partial
generalization of our proof to more than three alternatives,
hoping that the technique can be extended to obtain a quantitative
Gibbard-Satterthwaite theorem without the neutrality assumption.

\subsection{Reduction from an SCF with Low Dependence on Irrelevant
Alternatives to a GSWF which Almost Always has a Condorcet Winner}

In order to present the results of this section, we have to
generalize the notions of $TR_3$ and $NT(G)$ defined in
Section~\ref{sec:second-reduction} to GSWFs on $m$ alternatives.

The class $TR_m$ of all GSWFs on $m$ alternatives which satisfy
the IIA condition and whose output is always transitive, was
partially characterized by Wilson~\cite{Wil72}, and fully
characterized by Mossel~\cite{Mos09} in the following theorem:
\begin{theorem}[Mossel]\label{Thm:Mossel3}
The class $TR_m$ consists exactly of all GSWFs $G$ on $m$
alternatives satisfying the IIA condition, for which there exists
a partition of the set of alternatives into disjoint sets
$A_1,A_2,\ldots,A_r$ such that:
\begin{itemize}
\item For any profile, $G$ ranks all the alternatives in $A_i$
above all the alternatives in $A_j$, for all $i<j$.

\item For all $s$ such that $|A_s| \geq 3$, the restriction of $G$
to the alternatives in $A_s$ is a dictatorship or an
anti-dictatorship.

\item For all $s$ such that $|A_s|=2$, the restriction of $G$ to
the alternatives in $A_s$ is an arbitrary non-constant function of
the individual preferences between the two alternatives in $A_s$.
\end{itemize}
\end{theorem}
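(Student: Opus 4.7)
The easy direction ($\Leftarrow$) is immediate: any $G$ of the described block form satisfies IIA coordinate-wise, and the output is always transitive because the ranking between blocks is fixed once and for all, while inside each block the restriction is either a single pairwise comparison, a dictator, or an anti-dictator -- all of which are transitive. So the content is in the forward direction.

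My plan for ($\Rightarrow$) is to first handle $m=3$ and then bootstrap to arbitrary $m$ by restricting to triples. For the base case, fix alternatives $\{a,b,c\}$ and let $f_{ab},f_{bc},f_{ac}:\{0,1\}^n\to\{0,1\}$ be the pairwise IIA functions. Transitivity of $G$ says that for every profile $x$, the vector $(f_{ab}(x^{ab}),f_{bc}(x^{bc}),f_{ac}(x^{ac}))$ avoids the two cyclic patterns. If some $f_{ab}$ is constant, say $\equiv 1$, then $a$ is always above $b$, which puts $a$ in an earlier block than $b$; transitivity then forces $f_{ac}$ and $f_{bc}$ to agree whenever they disagree with the constant, which after a case analysis yields exactly the ``$c$ always at top / always at bottom / sandwiched between $a$ and $b$'' structures, each consistent with the claimed partition. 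If all three $f_{\cdot\cdot}$ are non-constant, I would reduce to the classical Arrow theorem by ``orienting'' each pair: replace each $f_{xy}$ by a unanimity-respecting Boolean function $\tilde f_{xy}$ obtained by possibly swapping the roles of $x$ and $y$, check that transitivity of $G$ translates to transitivity of the re-oriented GSWF $\tilde G$, apply Arrow to conclude that $\tilde G$ is a dictatorship of some voter $i$, and then undo the orientations to see that $G$ itself is a dictatorship or anti-dictatorship of voter $i$.

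To lift from $m=3$ to general $m$, I would define the relation $a\sim b$ to mean $G^{a,b}$ is non-constant, and set $a\triangleright b$ to mean $G^{a,b}\equiv 1$. Using the base case applied to triples $\{a,b,c\}$, I would check (i) that $\triangleright$ is a strict total order on the $\sim$-classes (transitivity and trichotomy come from analyzing what the base case allows on each triple), and (ii) that $\sim$ is an equivalence relation, the only non-trivial point being transitivity: if $a\sim b$ and $b\sim c$ but $a\triangleright c$, inspect the base-case classification on $\{a,b,c\}$ -- the two non-constant pairs together with the constant $G^{a,c}\equiv 1$ force $b$ to be ``sandwiched,'' which contradicts non-constancy of both $G^{a,b}$ and $G^{b,c}$. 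This yields the partition $A_1,\ldots,A_r$ ordered by $\triangleright$.

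Finally I would analyze each block $A_s$. For $|A_s|=2$ there is nothing to prove beyond non-constancy. For $|A_s|\ge 3$, inside the block every pair is non-constant, so the base case applied to each triple in $A_s$ puts the restriction into the dictator-or-anti-dictator case. The remaining task -- the one I expect to be the real work -- is proving \emph{consistency}: the same voter $i$ serves as dictator (or as anti-dictator, with the same orientation) for every triple in $A_s$. I would do this by picking one triple, obtaining the identity of $i$ and a sign $\sigma\in\{+,-\}$, and then, for any fourth alternative $d\in A_s$, considering the triples $\{a,b,d\}$, $\{a,c,d\}$, $\{b,c,d\}$; the pairwise functions $G^{a,d},G^{b,d},G^{c,d}$ are forced by IIA to agree with the dictator/anti-dictator of voter $i$ with sign $\sigma$ in order to avoid a Condorcet-cycle profile (constructible by choosing the preferences of voter $i$ and one other voter adversarially). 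This globalization step is the main obstacle, and it is exactly where the Kalai-Mossel-style ``local-to-global'' argument for GSWFs is needed; once it is in place, the three bullets of the theorem follow.
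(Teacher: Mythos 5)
First, a point of reference: the paper does not prove this statement at all --- it is quoted from Mossel~\cite{Mos09} (with Wilson~\cite{Wil72} credited for a partial characterization) and used as a black box. So your attempt has to stand on its own. Your overall architecture (easy direction; base case $m=3$; lifting to general $m$ via triples; consistency across overlapping triples) is the natural one, and the lifting steps are sound modulo the base case. In fact the ``globalization'' step you single out as the main obstacle is the easy part: two triples inside a block that share a pair share the pairwise function $G^{a,b}$, and a dictatorship/anti-dictatorship of voter $i$ with sign $\sigma$ already determines $(i,\sigma)$ from that single pairwise function, so consistency propagates along a chain of pair-overlapping triples with no further work.

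The genuine gap is in the base case, in the all-non-constant branch. You propose to make each $f_{xy}$ unanimity-respecting ``by possibly swapping the roles of $x$ and $y$'' and then invoke classical Arrow. But swapping the roles of $x$ and $y$ replaces $f_{xy}(z)$ by $1-f_{xy}(\mathbf{1}-z)$, and this operation \emph{preserves} unanimity status: the swapped function satisfies $\tilde f(\mathbf{1})=1$ and $\tilde f(\mathbf{0})=0$ if and only if the original does. So no choice of orientations converts a non-unanimous pairwise aggregator into a unanimous one. Worse, non-constancy does not a priori exclude functions with $f_{xy}(\mathbf{1})=f_{xy}(\mathbf{0})$, which cannot be made unanimous by any relabeling of the pair or any global reversal. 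The assertion that transitivity together with non-constancy of all three pairs forces $f_{xy}(\mathbf{1})\ne f_{xy}(\mathbf{0})$ for each pair, with a sign consistent across the three pairs (so that after at most one global reversal one lands in the Pareto setting of Arrow), is precisely Arrow's theorem \emph{without} the Pareto condition, i.e., Wilson's theorem~\cite{Wil72} --- which is the nontrivial content of the very statement you are proving. As written, the reduction to classical Arrow is circular at exactly this point; you must either prove this lemma directly or cite it. A smaller issue: in the constant-pair branch, your trichotomy ``$c$ at top / at bottom / sandwiched'' omits the admissible configurations in which $c$ forms a two-element block with exactly one of $a,b$ (exactly one of $f_{ac},f_{bc}$ non-constant); the case analysis must include these and must also rule out both $f_{ac}$ and $f_{bc}$ being non-constant while $f_{ab}$ is constant (which does follow from transitivity, but needs to be said).
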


While the notion $NT(G)$ makes sense also for GSWFs on $m$
alternatives, we use here a different notion which coincides with
$NT(G)$ in the case of three alternatives:
\begin{notation}
Let $G$ be a GSWF on $m$ alternatives. The probability that $G$
does not have a Generalized Condorcet Winner (GCW) is denoted by
\[
NGCW(G) = \Pr_{x \in (L_m)^n} [\mbox{ G does not have a GCW at x
}].
\]
Similarly, $GCW(G)$ denotes the probability that $G$ has a GCW.
\end{notation}

Under these definitions, Lemma~\ref{lemma:second-reduction}
generalizes directly to the case of $m$ alternatives. We get:
\begin{lemma}\label{lemma:second-reduction-general}
Let $\epsilon_1,\epsilon_2>0$, and let $F$ be an SCF on $m$
alternatives, such that:
\begin{itemize}
\item $M^{a,b}(F) \leq \epsilon_1$ for all pairs $(a,b)$.

\item $F$ is at least $\epsilon_2$-far from a dictatorship and
from an anti-dictatorship.

\item There exist alternatives $a,b,c$, such that
\[
\min \left( \Pr_{x \in (L_m)^n} [F(x)=a], \Pr_{x \in (L_m)^n}
[F(x)=b], \Pr_{x \in (L_m)^n} [F(x)=c] \right) \geq \epsilon_2.
\]
\end{itemize}
Then one can construct a GSWF $G$ on $m$ alternatives, such that:
\begin{enumerate}
\item $G$ satisfies the IIA condition.

\item $Dist(G,TR_m) \geq \epsilon_2 - {{m}\choose{2}} \cdot
\sqrt{\epsilon_1}$.

\item $NGCW(G) \leq {{m}\choose{2}} \cdot \sqrt{\epsilon_1}$.
\end{enumerate}
Furthermore, if $F$ is neutral, then $G$ is neutral as well.
\end{lemma}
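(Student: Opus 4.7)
The plan is to follow the same blueprint as Lemma~\ref{lemma:second-reduction}, with the same construction of $G$ from $F$: set $G^{a,b}(x)=1$ exactly when $\Pr_{x'}[F(x')=a \mid (x')^{a,b}=x^{a,b}] > \Pr_{x'}[F(x')=b \mid (x')^{a,b}=x^{a,b}]$, breaking ties by a fixed voter's preference. Assertion~1 (IIA) and the preservation of neutrality are then immediate from the definition, since $G^{a,b}(x)$ depends on $x$ only through $x^{a,b}$. For assertion~3 I would copy the minority preference argument verbatim: declare $x$ a minority preference on the pair $(a,b)$ if $F(x)\in\{a,b\}$ disagrees with $G^{a,b}(x)$, and observe that if $x$ is not a minority preference on any pair and $F(x)=a$, then $G^{a,b}(x)=1$ for every $b\neq a$, so that $a$ is a GCW of $G$ at $x$. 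Consequently $NGCW(G)$ is bounded by the probability that $x$ is a minority preference for some pair, which by Proposition~\ref{mstarn} (whose proof makes no reference to $m$) and the hypothesis $M^{a,b}(F)\leq\epsilon_1$ yields $NGCW(G)\leq \binom{m}{2}\sqrt{\epsilon_1}$.

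The substantive work is assertion~2. Let $H \in TR_m$ attain $Dist(G,H)=\epsilon$ and let $A_1,\ldots,A_r$ be its partition from Mossel's characterization (Theorem~\ref{Thm:Mossel3}). On the profiles where $G$ agrees with $H$ and $x$ is not a minority preference for any pair --- which together have probability at least $1-O(\epsilon)-\binom{m}{2}\sqrt{\epsilon_1}$ --- the value $F(x)$ must equal the top element of the order output by $H$, which is completely determined by $A_1$ and the structure of the restriction of $H$ to $A_1$. I would then split into four cases mirroring the three-alternative proof: (i) $|A_1|=1$ with $A_1=\{a\}$, so $\Pr[F(x)=a]$ is close to $1$, contradicting the existence of some other alternative $b$ with $\Pr[F(x)=b]\geq\epsilon_2$; (ii) $|A_1|=2$, so $\Pr[F(x)\in A_1]$ is close to $1$ while some guaranteed alternative necessarily lies outside $A_1$; (iii) $|A_1|\geq 3$ and the restriction of $H$ to $A_1$ is a dictatorship of voter $i$, in which case $F$ is close to voter $i$'s top in $A_1$ and is pushed against the $\epsilon_2$-distance from a full dictatorship using the third hypothesis; (iv) the anti-dictatorship case, handled symmetrically via the bottom block $A_r$. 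Combining the four cases should yield $\epsilon \geq \epsilon_2 - \binom{m}{2}\sqrt{\epsilon_1}$.

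The main obstacle I expect is case (iii). The immediate conclusion there is only that $F$ is close to ``voter $i$'s top in $A_1$,'' which is genuinely different from a full dictatorship whenever $|A_1|<m$, so the $\epsilon_2$-far-from-dictatorship hypothesis does not apply directly. Rescuing the argument requires the third hypothesis in a careful way: among the three alternatives with $\Pr[F(x)=\cdot]\geq\epsilon_2$, if any one lies outside $A_1$ then $\Pr[F(x)\in A_1]$ cannot be close to $1$ and we are done; otherwise $\{a,b,c\}\subseteq A_1$, forcing $|A_1|\geq 3$, and one has to rerun the dictatorship-style contradiction internally to the restriction $H|_{A_1}$. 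Making this bookkeeping tight enough so that the final loss is exactly $\binom{m}{2}\sqrt{\epsilon_1}$, and simultaneously handling the $|A_1|=2$ variant in which the non-constant pair function on $A_1$ need not be a dictator, is the delicate part.
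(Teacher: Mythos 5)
Your construction of $G$, your handling of Assertions~1 and~3, and the preservation of neutrality coincide with the paper's: the minority-preference device and the Cauchy--Schwarz bound $N^{a,b}\leq\sqrt{M^{a,b}}$ carry over verbatim, and summing over the $\binom{m}{2}$ pairs gives Assertion~3. Your case analysis for Assertion~2 is also the paper's, only more finely subdivided: the paper disposes of your cases (i) and (ii) --- and more generally of any situation where one of the guaranteed alternatives $a,b,c$ falls outside $A_1$ --- by the single observation that for $d\notin A_1$, $F(x)=d$ forces either $G(x)\neq H(x)$ or $x$ to be a minority preference, whence $\Pr[F(x)=d]\leq\epsilon+\binom{m}{2}\sqrt{\epsilon_1}$; otherwise $\{a,b,c\}\subseteq A_1$, so $|A_1|\geq 3$ and $H|_{A_1}$ is a dictatorship or anti-dictatorship by Mossel's characterization, exactly as you say.

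The obstacle you flag in case (iii) is, however, a genuine gap rather than delicate bookkeeping, and your proposal does not close it (nor does the paper's sketch, which at this point simply invokes ``the same way as cases~3 and~4''). When $3\leq|A_1|<m$ and $H|_{A_1}$ is a dictatorship of voter $i$, all one obtains is that $F(x)$ agrees on almost all profiles with voter $i$'s top alternative \emph{within $A_1$}; that function is at distance $1-|A_1|/m$ from the true dictatorship of voter $i$, so the hypothesis that $F$ is $\epsilon_2$-far from a dictatorship yields no contradiction. Concretely, take $m=4$ and let $F(x)$ be voter~1's favourite among $\{a,b,c\}$: then every $M^{u,v}(F)=0$, $F$ is $1/4$-far from every dictatorship and anti-dictatorship, and each of $a,b,c$ is elected with probability $1/3$, yet the constructed $G$ (voter~1's order on $\{a,b,c\}$ with $d$ last) lies in $TR_4$, so Assertion~2 fails with $\epsilon_2=1/4$ and $\epsilon_1$ small. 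Salvaging the statement requires either strengthening the distance hypothesis (e.g.\ demanding that $F$ be $\epsilon_2$-far from every rule of the form ``voter $i$'s top, or bottom, choice within a fixed set of at least three alternatives'') or weakening the conclusion. A smaller slip: your case (iv) should arise from $H|_{A_1}$ being an \emph{anti}-dictatorship, making the GCW voter $i$'s least-preferred alternative within the top block $A_1$; it has nothing to do with the bottom block $A_r$.
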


Note that the third condition imposed on $F$, which means that $F$
is $\epsilon_2$-far from having only two alternatives in its
range, is weaker than being $\epsilon_2$-far from breaching
non-imposition (which means that any alternative is elected with
probability at least $\epsilon_2$).

\medskip

The proof of Lemma~\ref{lemma:second-reduction-general} is very
similar to the proof of Lemma~\ref{lemma:second-reduction}, and
thus we present only the required modifications.

\medskip

\noindent \textit{Sketch of Proof.} The definition of $G$ and the
proofs of Assertions~1 and~3 are the same as in the proof of
Lemma~\ref{lemma:second-reduction}. In order to prove Assertion~2,
let $Dist(G,TR_m)=\epsilon$, and let $H \in TR_m$ be such that $G$
can be transformed to $H$ by changing only fraction $\epsilon$ of
the values. By Theorem~\ref{Thm:Mossel3} applied to $H$, the set
of alternatives can be partitioned into disjoint sets
$A_1,A_2,\ldots,A_r$ such that for any profile, $H$ ranks all the
alternatives in $A_i$ above all the alternatives in $A_j$, for all
$i<j$. Note that for any alternative $d \not \in A_1$, $F(x)=d$
can occur only if either $G(x) \neq H(x)$ or $x$ is a minority
preference. Hence,
\[
\Pr_x [F(x)=d] \leq \epsilon + {{m}\choose{2}} \cdot
\sqrt{\epsilon_1}.
\]
Therefore, either $\epsilon_2 \leq \epsilon + {{m}\choose{2}}
\cdot \sqrt{\epsilon_1}$ (as claimed in Assertion~2 of the lemma),
or $a,b,c \in A_1$. In the latter case, $|A_1| \geq 3$ and thus,
by Theorem~\ref{Thm:Mossel3}, the restriction of $H$ to the
alternatives in $A_1$ is a dictatorship or an anti-dictatorship.
In both cases, the assertion of the lemma is proved in the same
way as cases~3 and~4 in the proof of
Lemma~\ref{lemma:second-reduction}. $\Box$

\subsection{Generalization of the Quantitative Arrow Theorem}

The quantitative versions of Arrow's theorem presented by Mossel~\cite{Mos09}
and Keller~\cite{Kel10} apply also to GSWFs on $m$ alternatives, and
assert that if $Dist(G,TR_m)$ is not too small, then $NT(G)$ is
also not too small. However, the reduction given by
Lemma~\ref{lemma:second-reduction-general} yields a bound on
$NGCW(G)$ rather than on $NT(G)$, and thus we need a lower bound
on $NGCW(G)$ (the probability of not having a Generalized
Condorcet Winner), which may be much lower than the probability of
being non-transitive.

In this subsection we prove a generalization of the quantitative
Arrow theorem which allows to obtain a lower bound on $NGCW(G)$.
However, we require the additional assumption that $G$ is neutral
(i.e., invariant under permutation of the alternatives), and our
proof relies heavily on this assumption.

Before we present the generalization, we recall a few properties
of neutral GSWFs. Let $G$ be a GSWF on $n$ voters and $m$
alternatives denoted by $\{1,2,\ldots,m\}$. If $G$ satisfies the
IIA condition, then its output is determined by ${{m}\choose{2}}$
Boolean functions $G^{i,j}:\{0,1\}^n \rightarrow \{0,1\}$, which
are given the individual preferences between alternatives $i$ and
$j$ and output the preference of the society between them. If, in
addition, $G$ is neutral, then all the functions $G^{i,j}$ are
equal, and thus we denote them by a single function $g:\{0,1\}^n
\rightarrow \{0,1\}$, and write $G=g^{\otimes {{m}\choose{2}}}$.
Note that the neutrality assumption implies also that $g$ is an
odd function, that is,
$g(a_1,\ldots,a_n)=1-g(1-a_1,\ldots,1-a_n)$, and in particular,
$\Pr[g=1]=1/2$. We denote the distance of $g$ from a dictatorship
or an anti-dictatorship by $Dist(g,DICT_2)$.

We are now ready to present our result. We start with an
equivalent formulation of Kalai's version of the quantitative
Arrow theorem~\cite{Ka02}.
\begin{theorem}[Kalai]
There exists a constant $C_3 >0 $ such that the following holds.
Let $G=g^{\otimes {{3}\choose{2}}}$ be a neutral GSWF on $n$
voters and $3$ alternatives satisfying the IIA condition. Then
\[
NGCW(G) \geq C_3 \cdot Dist(g,DICT_2).
\]
\end{theorem}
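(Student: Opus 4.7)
The plan is to reduce the statement to the second clause of Theorem~\ref{Thm:Quant-Arrow} (Kalai's quantitative Arrow theorem) by means of two elementary observations, one about the geometry of tournaments on three elements and one about how distances interact with the neutral tensor structure $G = g^{\otimes 3}$. The key point is that, although the theorem is phrased in terms of the probability of lacking a Generalized Condorcet Winner and in terms of the distance of the single Boolean function $g$ from $\mathrm{DICT}_2$, both of these quantities can be recognized as familiar ones in disguise once $m=3$ and neutrality are used.

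First, I would argue that for three alternatives $NGCW(G)=NT(G)$. Indeed, for any profile the output of $G$ is a tournament on $\{a,b,c\}$; such a tournament is either a total order, in which case the top element is a GCW and the bottom is a GCL, or else a $3$-cycle, in which case every alternative loses to some other and no GCW exists. Thus the event "$G(x)$ has no GCW" coincides exactly with the event "$G(x)$ is non-transitive", so $NGCW(G)=NT(G)$. This is essentially the only place in the proof where $m=3$ gets used seriously; for larger $m$ a failure of GCW is a strictly weaker condition than non-transitivity.

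Second, I would compare $\mathrm{Dist}(g,\mathrm{DICT}_2)$ with $\mathrm{Dist}(G,\mathrm{DICT/ADICT\ GSWF})$. Since $G=g^{\otimes\binom{3}{2}}$, every GSWF dictatorship or anti-dictatorship of voter $i$ has the form $h_i^{\otimes 3}$ with $h_i(y)=y_i$ or $h_i(y)=1-y_i$. For a uniformly random profile $x\in(L_3)^n$ the marginal distribution of each $x^{a,b}$ is uniform on $\{0,1\}^n$, so restricting attention to a single pair $(a,b)$ shows
\[
\Pr_x\!\bigl[G(x)\neq h_i^{\otimes 3}(x)\bigr]
\;\ge\;\Pr_x\!\bigl[g(x^{a,b})\neq h_i(x_i^{a,b})\bigr]
\;=\;\mathrm{Dist}(g,h_i).
\]
Taking the minimum over $i$ and over the two choices of $h_i$ yields $\mathrm{Dist}(G,\mathrm{DICT/ADICT\ GSWF})\ge\mathrm{Dist}(g,\mathrm{DICT}_2)$. (A union bound over the three pairs shows the two distances are in fact within a factor of $3$, but only the stated inequality is needed.)

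Finally, I would invoke the second clause of Theorem~\ref{Thm:Quant-Arrow}: $G$ is neutral, satisfies IIA, and has $\mathrm{Dist}(G,\mathrm{DICT/ADICT})\ge\mathrm{Dist}(g,\mathrm{DICT}_2)=:\delta$, so Kalai's result gives $NT(G)\ge C_2\cdot\delta$. Combining with the first step, $NGCW(G)=NT(G)\ge C_2\cdot\mathrm{Dist}(g,\mathrm{DICT}_2)$, proving the theorem with $C_3=C_2$. There is no real obstacle here; the only thing to be careful about is to check that the classes compared really match, i.e.\ that the "dictatorship/anti-dictatorship" in Kalai's theorem for a neutral GSWF precisely corresponds to $g\in\mathrm{DICT}_2$, which is what the neutral tensor form $G=g^{\otimes 3}$ guarantees.
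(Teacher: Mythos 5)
Your proposal is correct. The paper does not actually prove this statement---it presents it as ``an equivalent formulation of Kalai's version of the quantitative Arrow theorem'' and simply cites \cite{Ka02}---and your two observations (for $m=3$ a profile lacks a GCW exactly when the output tournament is a $3$-cycle, so $NGCW(G)=NT(G)$, and the tensor structure $G=g^{\otimes 3}$ together with neutrality transfers $Dist(g,DICT_2)$ to the distance of $G$ from dictatorships and anti-dictatorships) are precisely the reformulation the paper takes for granted, so your argument supplies the omitted justification rather than diverging from it.
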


We prove the following generalization:
\begin{theorem}\label{Thm:NGCW}
For any $\epsilon>0$, and for every $m \geq 3$, there exists a
constant $\delta_m(\epsilon) > 0 $ such that the following holds.
Let $G=g^{\otimes {{m}\choose{2}}}$ be a neutral GSWF on $n$
voters and $m$ alternatives which satisfies the IIA condition. If
$Dist(g,DICT_2) \geq \epsilon$, then $NGCW(G) \geq \delta_m$.

Moreover, for $m=3,4,5$, we can take $\delta_m = C \cdot
\epsilon$, where $C$ is a universal constant.
\end{theorem}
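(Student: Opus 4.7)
The plan is to combine Kalai's quantitative Arrow theorem for three alternatives with two further ingredients: an algebraic identity at $m=4$ that comes from neutrality together with odd-ness of $g$, and a clean independence observation that drives an induction from $m=4$ upward. Throughout, set $p_m := \Pr[1 \text{ is a GCW of } G_m]$, so that by neutrality $GCW(G_m) = m \cdot p_m$ and $NGCW(G_m) = 1 - m p_m$; the theorem therefore reduces to bounding $p_m$ away from $1/m$. I will also use the marginalization fact already exploited elsewhere in the paper: the joint distribution of pairwise comparisons among any $k$-subset $T \subseteq [m]$, under a uniform $L_m$ profile, coincides with that under a uniform $L_k$ profile, so $G|_T$ is again a neutral IIA GSWF of the form $g^{\otimes \binom{k}{2}}$; applying Kalai's theorem at $k=3$ yields $p_3 \leq 1/3 - C_3 \epsilon / 3$.

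For $m=4$, set $A_j := \{g(x^{1,j}) = 1\}$ for $j=2,3,4$. Neutrality of $G$ makes the joint distribution of $(\mathbf{1}_{A_2}, \mathbf{1}_{A_3}, \mathbf{1}_{A_4})$ invariant under permutations of $\{2,3,4\}$, while odd-ness of $g$ makes it invariant under simultaneous complementation of all three coordinates; combined with the marginal condition $\Pr[A_j] = 1/2$ and normalization, these symmetries leave only one free parameter. A short inclusion--exclusion computation then forces
\[
p_4 = \tfrac{3}{2}\, p_3 - \tfrac{1}{4}.
\]
Inserting the Kalai bound on $p_3$ gives $p_4 \leq 1/4 - C_3 \epsilon / 2$, so $NGCW(G_4) = 1 - 4 p_4 \geq 2 C_3 \epsilon$.

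For $m \geq 5$ I induct using the following observation. A uniformly random permutation of $[m]$ at any voter factors as the position of alternative $m$ (uniform in $\{1,\dots,m\}$) together with a conditionally independent uniformly random permutation of $[m-1]$ in the remaining slots. Consequently the vector $x^{m,\cdot}$ of pairwise comparisons involving $m$ is stochastically independent of the collection $\{x^{j,k} : j,k \in [m-1]\}$. By IIA this makes the events $\{G|_{[m-1]} \text{ has no GCW}\}$ and $\{m \text{ is a GCW of } G_m\}$ independent. Moreover, if $G|_{[m-1]}$ has no GCW then no element of $[m-1]$ can be a GCW of $G_m$, so on this event a GCW of $G_m$, if it exists, must be $m$. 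Splitting and using the independence gives
\[
NGCW(G|_{[m-1]}) = \Pr[G|_{[m-1]} \text{ no GCW},\, G_m \text{ no GCW}] + NGCW(G|_{[m-1]}) \cdot p_m,
\]
which combined with $p_m \leq 1/m$ rearranges to $NGCW(G_m) \geq \tfrac{m-1}{m} \cdot NGCW(G|_{[m-1]})$. Iterating from $m=4$ the product telescopes to $4/m$, giving
\[
NGCW(G_m) \geq \tfrac{4}{m} \cdot NGCW(G_4) \geq \tfrac{8 C_3 \epsilon}{m}.
\]
This supplies a positive $\delta_m$ for every $m \geq 4$; for $m \in \{3,4,5\}$ the three bounds $C_3 \epsilon$, $2 C_3 \epsilon$, $\tfrac{8}{5} C_3 \epsilon$ are all at least $C_3 \epsilon$, giving the moreover clause with $C = C_3$.

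The main obstacle, to my mind, is the $m=4$ identity: one must carefully track which symmetries come from where (permutation symmetry from neutrality on $\{2,3,4\}$, $\mathbb{Z}/2$ symmetry from odd-ness of $g$), verify that after the marginal and normalization constraints only one free parameter survives, and identify it with $p_3$. The independence observation powering the inductive step is arguably the most conceptually surprising ingredient, but its verification is immediate from writing a random permutation of $[m]$ as position-of-$m$ together with a uniform permutation of the remaining $m-1$ alternatives.
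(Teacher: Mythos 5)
Your reduction to Kalai's theorem, your bookkeeping via $p_m=\Pr[1\mbox{ is a GCW}]$, and your $m=4$ identity $p_4=\tfrac32 p_3-\tfrac14$ (equivalently $GCW(G_4)=2\,GCW(G_3)-1$) all match the paper's argument. The problem is the inductive step. Your key claim --- that the vector $x^{m,\cdot}$ of pairwise comparisons involving alternative $m$ is independent of the comparisons $\{x^{j,k}:j,k\in[m-1]\}$ --- is false. It is true that the position of $m$ is independent of the induced order on $[m-1]$, but $x^{m,j}$ is not a function of the position of $m$ alone: it depends on where $j$ sits relative to that position, hence on the permutation of $[m-1]$. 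Already for a single voter and $m=3$: conditioned on the voter preferring $1$ to $2$, the pair $(x^{3,1},x^{3,2})$ is uniform on $\{(1,1),(0,1),(0,0)\}$, so $\Pr[x^{3,1}=1]=1/3$; conditioned on the voter preferring $2$ to $1$ it is uniform on $\{(1,1),(1,0),(0,0)\}$, so $\Pr[x^{3,1}=1]=2/3$. Thus the underlying bits are genuinely correlated, and you offer no argument for why the particular events $\{G|_{[m-1]}\mbox{ has no GCW}\}$ and $\{m\mbox{ is a GCW of }G_m\}$ should nevertheless be independent (neutrality gives permutation-invariance of both events over $[m-1]$, which is not enough). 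A further warning sign: your induction would yield $\delta_m=\Omega(\epsilon/m)$, which is dramatically stronger than what the paper proves ($\delta_m\ge (C\epsilon)^{\lfloor m/3\rfloor}$) and would essentially settle a conjecture the authors explicitly leave open.

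The paper circumvents exactly this issue by comparing only \emph{within disjoint blocks} of alternatives: if $[m]=A_1\cup A_2$ with $A_1,A_2$ disjoint, the induced orders on $A_1$ and on $A_2$ \emph{are} independent, and no GCW in either restriction forces no GCW for $G$; this gives the multiplicative bound $NGCW(G)\ge\delta_{m_1}\delta_{m_2}$ and hence the exponential loss in $m$. Note also that the paper cannot run your clean $m=4$ computation at $m=5$ (the two degree-$4$ terms in the inclusion--exclusion cancel), and instead derives the $m=5$ bound by expanding at $m=6$ and using $GCW(G_6)\le 1$; your induction was doing that work for you, so with the inductive step broken you are missing $m=5$ of the ``moreover'' clause as well. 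To repair your write-up, replace the independence-based induction by the disjoint-blocks argument (accepting the weaker $\delta_m$), and supply a separate derivation for $m=5$.
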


Before we present the proof of the theorem, we note that if a
neutral GSWF $G=g^{\otimes {{m}\choose{2}}}$ on $m$ alternatives
is at least $\epsilon$-far from a dictatorship and from an
anti-dictatorship, then $Dist(g,Dict_2) \geq
\epsilon/{{m}\choose{2}}$. Thus, Theorem~\ref{Thm:NGCW} implies
immediately the following corollary.
\begin{corollary}
For any $\epsilon>0$, and for every $m \geq 3$, there exists a
constant $\delta'_m(\epsilon) > 0 $ such that the following holds.
Let $G$ be a neutral GSWF on $m$ alternatives satisfying the IIA
condition. If $G$ is at least $\epsilon$-far from a dictatorship
and from an anti-dictatorship, then $NGCW(G) \geq \delta'_m$.

Moreover, for $m=3,4,5$, we can take $\delta'_m = C \cdot
\epsilon$, where $C$ is a universal constant.
\end{corollary}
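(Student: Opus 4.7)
The plan is to derive the corollary as an immediate consequence of Theorem~\ref{Thm:NGCW}, by translating the hypothesis that $G$ is far from every (anti-)dictator as a full GSWF into a lower bound on the distance of the underlying single-pair Boolean function $g$ from the Boolean (anti-)dictators. Since $G$ is neutral and satisfies IIA, the discussion preceding Theorem~\ref{Thm:NGCW} shows that there is a single Boolean function $g:\{0,1\}^n\to\{0,1\}$ with $G=g^{\otimes {{m}\choose{2}}}$, i.e., $G^{i,j}(x)=g(x^{i,j})$ for every pair of alternatives $(i,j)$.

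The main ingredient is the elementary inequality $Dist(g,DICT_2)\geq \epsilon/{{m}\choose{2}}$. To prove it, fix a voter $k$ and let $D_k$ denote the dictator of voter $k$ as a full GSWF, so that $D_k^{i,j}(x)=x_k^{i,j}$ for every pair $(i,j)$. Then $G(x)\neq D_k(x)$ holds iff $g(x^{i,j})\neq x_k^{i,j}$ for at least one pair $(i,j)$, so a union bound yields
\[
Dist(G,D_k)\;\leq\;\sum_{(i,j)} \Pr_{x\in (L_m)^n}\!\bigl[g(x^{i,j})\neq x_k^{i,j}\bigr]\;=\;{{m}\choose{2}}\cdot Dist(g,\mathrm{dict}_k),
\]
where the equality uses the fact that the marginal of $x^{i,j}$ under a uniform profile is the uniform distribution on $\{0,1\}^n$. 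An identical computation handles anti-dictators. Since by hypothesis $Dist(G,D_k)\geq \epsilon$ and the analogous bound holds for the anti-dictator of voter $k$, for every $k$, dividing through by ${{m}\choose{2}}$ gives the claim.

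Plugging $\epsilon'=\epsilon/{{m}\choose{2}}$ into Theorem~\ref{Thm:NGCW} then gives $NGCW(G)\geq \delta_m(\epsilon/{{m}\choose{2}})$, and the corollary follows with $\delta'_m(\epsilon):=\delta_m(\epsilon/{{m}\choose{2}})$. For $m\in\{3,4,5\}$, Theorem~\ref{Thm:NGCW} is linear in its argument, so $\delta'_m$ is a universal constant times $\epsilon$, with a factor of $1/{{m}\choose{2}}\in\{1/3,1/6,1/10\}$ loss compared with the constant in Theorem~\ref{Thm:NGCW}.

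No serious obstacle arises: the entire quantitative content is already contained in Theorem~\ref{Thm:NGCW}, and the reduction from $G$ to $g$ is a one-line union bound. The only point worth double-checking is that the same $g$ must be simultaneously far from \emph{all} the $2n$ Boolean functions $y\mapsto y_k$ and $y\mapsto 1-y_k$, rather than merely close to none of them individually; but this is precisely what $Dist(g,DICT_2)\geq \epsilon/{{m}\choose{2}}$ encodes, and it follows from the voter-by-voter correspondence between GSWF (anti-)dictators and Boolean (anti-)dictators used in the inequality above.
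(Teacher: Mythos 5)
Your proposal is correct and follows exactly the paper's route: the paper reduces the corollary to Theorem~\ref{Thm:NGCW} via the single inequality $Dist(g,DICT_2)\geq\epsilon/{{m}\choose{2}}$, which it states without proof and which your union bound over the ${{m}\choose{2}}$ pairs (applied voter by voter to dictators and anti-dictators) justifies. No discrepancies.
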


\noindent \textbf{Proof of Theorem~\ref{Thm:NGCW}.}
The case $m=3$ is exactly Kalai's theorem above. We first give a
direct proof of the cases $m=4$ and $m=5$, and then show a general
inductive argument that allows to leverage the result to any
$m>5$.

\bigskip

\noindent \textbf{GSWFs on four alternatives}

\bigskip

We begin by considering the case $m=4$. For $1 \leq i,j \leq 4$,
let $X_{ij}$ be the random $0/1$ variable that indicates the event
$G^{i,j}(x)=1$, where the profile $x$ is chosen at random. Note
that by the neutrality assumption, the probability $GCW(G)$ is
precisely four times the probability that alternative $1$ is a GCW
of $G$. Hence,
\begin{equation}\label{inex4}
GCW(G) = 4 \cdot \mathbb{E}[\prod_{j=2}^4 X_{1j}] = 4 \cdot
\mathbb{E}[\prod_{j=2}^4 (1-X_{j1})].
\end{equation}
Before expanding this equation, we make three observations. First,
from the neutrality of $G$ it follows that $g$ is balanced (i.e.,
$\Pr[g=1]=1/2$), and thus, for all $j \in \{2,3,4\}$ we have
\[
\mathbb{E}[X_{j1}]=1/2.
\]
Next, for any pair $i,j$ with $2 \leq i,j \leq 4$, we can apply Kalai's
theorem to the GSWF $G'$ which is the restriction of $G$ to the
alternatives $\{1,i,j\}$ to get:
\begin{align*}
\mathbb{E}[X_{j1}X_{i1}] =& \Pr[\mbox{ 1 is a GCL of G' }] \\
=& \frac{1}{3} \cdot GCW(G') \leq \frac 1 3 (1- C_3 \cdot
Dist(g,DICT_2)).
\end{align*}
Finally, from neutrality, the probability that alternative $1$ is
a GCW is precisely equal to the probability that he is a GCL, and
thus,
\[
\mathbb{E}[\prod_{j=2}^4 X_{1j}] = \mathbb{E}[\prod_{j=2}^4
X_{j1}].
\]
Using these observations we expand Equation~(\ref{inex4}) and get:
\[
GCW(G) = 4 ( 1 -3\cdot \frac 1 2 + 3 \cdot \frac{1}{3} \cdot
GCW(G') - \frac{1}{4} \cdot GCW(G)),
\]
and thus, by Kalai's theorem,
\begin{equation} \label{34}
GCW(G) = 2GCW(G') - 1  \leq 1-2C_3 \cdot Dist(g,DICT_2),
\end{equation}
which yields the assertion of the theorem for $m=4$ with $\delta_4
= 2C_3 \cdot \epsilon$.

\bigskip

\noindent \textbf{GSWFs on five alternatives}

\bigskip

Next we consider the case $m=5$. Unfortunately, the first natural
step, generalizing the inclusion-exclusion type
formula~(\ref{inex4}) to get
\[
GCW(G)= 5 \cdot \mathbb{E}[\prod_{j=2}^5 (1-X_{j1})],
\]
does not help, due to an annoying prosaic reason: the two terms $
\mathbb{E}[\prod_{j=2}^5 (X_{1j})]$ and $\mathbb{E}[\prod_{j=2}^5
(1-X_{j1})] $ which appear on the two sides of the equation have
the same sign, and cancel out. To remedy this we consider a
neutral GSWF $G_6$ on six alternatives, and denote its
restrictions to five, four, and three alternatives by $G_5,G_4,$
and $G_3$, respectively. We start with the expansion
\[
GCW(G_6) = 6 \cdot \mathbb{E}[\prod_{j=2}^6 (1-X_{j1})],
\]
which gives:
\begin{align*}
GCW(G_6) =& 6 ( 1 - \frac 5 2 + {5 \choose 2}
\frac{GCW(G_3)}{3}- \\
&- {5 \choose 3} \frac{GCW(G_4)}{4} +{5 \choose 4 }
\frac{GCW(G_5)}{5} - \frac{GCW(G_6)}{6} ).
\end{align*}
Rearranging, and using Equation~(\ref{34}), we get:
\[
\frac{GCW(G_6)}{3} + \frac{5 \cdot GCW(G_3)}{3} - 1 = GCW(G_5).
\]
Since $GCW(G_6) \leq 1$ and $ GCW(G_3) \leq 1- C_3 \cdot
Dist(g,DICT_2)$, this yields
\[
GCW(G_5) \leq 1 - \frac 5 3 C_3 \cdot Dist(g,DICT_2),
\]
which is the assertion of the theorem for $m=5$ with
$\delta_5=\frac 5 3 C_3 \cdot \epsilon$.

\bigskip

\noindent \textbf{GSWFs on more than five alternatives}

\bigskip

The assertion of the theorem for all $m>5$ follows from the cases
$m=3,4,5$ using full induction.

Assume that we already proved the assertion for $m_1$ and $m_2$,
and let $G=g^{\otimes {{m_1+m_2}\choose{2}}}$ be a GSWF on
$m_1+m_2$ alternatives, such that $Dist(g,Dict_2)=\epsilon$.
Applying the assertion to the restrictions of $G$ to the first
$m_1$ alternatives and to the last $m_2$ alternatives, we get that
with probability at least $\delta_{m_1}$, there is no GCW among
the first $m_1$ alternatives, and with probability at least
$\delta_{m_2}$ there is no GCW among the last $m_2$ alternatives.
The key point here is that these two events are independent since
the voter preferences within two disjoint sets of alternatives are
totally independent of each other. Thus, the probability that
there is no GCW at all is at least $\delta_{m_1} \cdot
\delta_{m_2}$.

Starting with $\delta_m = C \cdot \epsilon$ for $m=3,4,5$, we get
$\delta_m \ge (C \epsilon)^{\lfloor m/3 \rfloor}$ for general $m$.
(Luckily, every integer $m>5$ can be represented as $m=3a+4b$,
with $a$ and $b$ nonnegative integers.)

This completes the proof of Theorem~\ref{Thm:NGCW}. $\Box$

\begin{rem}
We note that the value of $\delta_m$ obtained in our proof for
general values of $m$ decreases as $\epsilon^{O(m)}$. However, we
conjecture that for a fixed $\epsilon>0$, not only that $\delta_m$
need not decrease with $m$, it actually tends to 1. This
conjecture is supported by a recent work of Mossel~\cite{Mossel07}
who calculated the asymptotic value $\lim_m \lim_n [1- \delta(m)]=
\Theta(1/m)$ for the case when $G^{a,b}: \{0,1\}^n \ra \{0,1\}$ is
a low-influence function (e.g., the majority function) on
$x^{a,b}$ for all $a,b \in \{1,2,\ldots,m\}$.
\end{rem}

\remove{
\section{Preliminaries and Notation}

\subsection{Preferences}

Let $[m]$ denote a finite set of $m$ alternatives, $m\ge 3$.  Denote by $L$ the set of total orders on $[m]$.
We view $L$ as a subset of the space $\{0,1\}^{{m}\choose{2}}$, in which each bit denotes the
``preference'' between
two alternatives, where in $L$ these must be transitive, but in $\{0,1\}^{{m} \choose {2}}$ not necessarily so.
We will denote the input from
society by a matrix $x$ where we also view $x$ as a column vector $(x_1,\ldots, x_n) \in L^n$,
where each coordinate $x_i$ is a row vector of the form $x_i \in \{0,1\}^{{m} \choose {2}} $,
where for $a,b \in [m]$, $x_i^{a,b}=1$ denotes that voter $i$ prefers candidate $a$ to candidate $b$.  We will
also denote the $(a,b)$'th column of $x$ by $x^{a,b} = (x_1^{a,b}, \ldots, x_n^{a,b})$.

Given any two functions $f,g$ from a probability space $X$ to a set $Y$
we denote the distance between $f$ and $g$ as
$$
\Delta(f,g) = \Pr_{x \in X} [f(x) \not = g(x)].
$$
If $G$ is a family of such functions we define $\Delta(f,G) = \min_{g \in G} \Delta(f,g).$
In our setting $X$ will always be endowed with the uniform probability, so the distance
between two functions is nothing else than the proportion of inputs on which
the functions disagree.

\subsection{Social Choice Functions}

A social choice function (henceforth SCF) is a function
$f : L^n \to [m]$. SCFs are also called voting methods.

A SCF is called neutral if it does not depend on the ``names'' of elements of $[m]$.
Formally, if it commutes with the action of all permutations on $[m]$.

\subsection{Social Welfare Functions}

A generalized SWF (GSWF) is
a function $F:L^n \to \{0,1\}^{{m}\choose{2}}$.  For every $a,b \in A$ we denote by $F^{a,b}$ the
$(a,b)$'th bit output by $F$. For convenience  we sometimes abuse notation and
write the output of $F^{a,b}$ as $a$ or $b$ rather than $1$ or $0$.
A social welfare function (SWF) is a function $F:L^n \to L$, i.e.
where $F(x)$ is a full (linear) order for all $x$.

$F$ is said to have a Generalized Condorcet Winner on $x$ if for some $a$ we have that for all $b$, $F^{a,b}(x)=1$.
We denote by CW the set of all GSWFs that have a Generalized Condorcet Winner for all $x$.  Note that $SWF \subseteq CW$.

A GSWF is called neutral if it does not depend on the ``names'' of elements of $[m]$.
Formally, if it commutes with all permutations on $[m]$.

A GSWF is said to satisfy independence of irrelevant alternatives (IIA) if the social ranking of any two alternatives
depends only on the rankings of all participants between these two alternatives.  Formally,
if $F^{a,b}$ is in fact a function just of $x^{a,b}$ rather than of all coordinates of $x$, as allowed by the general definition.

Note that if $F$ is a GSWF which is both neutral and IIA then it is determined by a single Boolean
function $f:\{0,1\}^n \ra \{0,1\}$, in the sense that $F^{a,b}(x^{a,b}) = f(x^{a,b})$ for all $a,b \in [m]$.
The neutrality also implies that in this case $f$ is an odd function:
$f(\bar x)= 1- f(x)$, where $\bar x_i = 1-x_i$.
In this case we will write $F=f^{\otimes  {m \choose  2}}.$

\subsection{Dictatorships}
As usual in Arrow-type theorems we will be concerned with
functions $f(x)$, where $x$ is a vector and $f$ depends only on a single coordinate
$x_i$.

The $i$-dictatorship SCF is given by $dict_i(x)$ being the top element in $x_i$.
The $i$-anti-dictatorship SCF is given by $\overline{dict}_i(x)$ being the bottom element in $x_i$.
We denote the set of dictatorships and anti-dictatorships by $DICT$.

The $i$-dictatorship SWF is given by $Dict_i(x)=x_i$, here, of course, $x_i$ is an order.
The $i$-anti-dictatorship is given by $\overline{Dict}_i(x)=\overline{x_i}$, where
$\overline{x_i}$ is the reverse of $x_i$.
We denote the set of dictatorships and anti-dictatorships by $DICT$, as we
did with SCFs.
(It will always be clear from the context whether we mean an SCF or an SWF).

Finally we will also use $DICT$ to denote the set of Boolean functions
$f: \{0,1\}^n \ra \{0,1\}$ that depend on a single coordinate, i.e
$f(x)=x_i$ or $f(x)=1-x_i.$

\section{Overview of the proof}

We recall our definition of manipulation power and our main theorem.

\vspace{0.1in}
{\bf Definition \ref {d:mp}:}
The {\em manipulation power} of voter $i$ on a social choice function $f$,
denoted $M_i(f)$, is the
probability that $x'_i$ is a profitable manipulation of $f$ by voter $i$ at profile $(x_1, \ldots,x_n)$, where
$x_1, \ldots,x_n$ and $x'_i$ are chosen uniformly at random among all full orders on $[m]$.
\vspace{0.1in}

\vspace{0.1in}
\noindent
%\begin {theorem}
{\bf Main Theorem: (Theorem \ref {t:mt})}
There exists a constant $C > 0$ such that For every $\epsilon>0$,
if $f$ is a neutral social choice function among 3 alternatives for $n$ voters that is $\epsilon$-far from dictatorship,
then:
$\sum_{i=1}^n M_i(f) \ge C \epsilon^2$.
%\end {theorem}
\vspace{0.1in}

%\begin{theorem} \label {t:mt} There exists a constant $C>0 $ such that for every $\epsilon>0$ the following holds.
%If $f$ is a neutral SCF for $n$ voters over $3$ alternatives and  $\Delta(f,g) > \epsilon$ for any dictatorship
%$g$, then $f$ has total manipulability: $\sum_{i=1}^n M_i(f) \ge C \epsilon^2 $.
%\end{theorem}

This theorem is proved in three steps.  The first two apply also for $m>3$ are are stated for general $m$.

\subsection{Step 1}

The first step is based on the work of \cite{Ka02} and concerns Social Welfare Functions:

\begin{lemma}\label{step1}
For every fixed $m$ and $\epsilon>0$ there exists $\delta>0$ such that if $F=f^{\otimes  {m \choose  2}}$
is a neutral IIA GSWF over
$m$ alternatives with $f:\{0,1\}^n \ra \{0,1\}$, and $\Delta(f,DICT) > \epsilon $, then
$F$ has probability of at least $\delta$ of not having a Generalized Condorcet Winner.
For $m=3,4,5$, $\delta=C\epsilon$,
where $C>0$ is an absolute constant.
\end{lemma}

The case $m=3$, which is all that is needed for our main theorem, was proved by \cite{Ka02}.
Our proof for general values of $m$ gives $\delta$ that decrease as $\epsilon^{O(m)}$.
However, we conjecture that for fixed $\epsilon>0$ not only that $\delta$ need not decrease with $m$,
it actually tends to 1.
This is supported by recent work of
Mossel \cite{Mossel07} who calculated the asymptotic value
$\lim_m \lim_n [1- \delta(m)]=  \Theta(1/m))$
for the case when $F^{a,b}: \{0,1\}^n \ra \{0,1\}$ is
the majority function on $x^{a,b}$ for all $a,b \in [m]$.

Also, since our starting point is assuming that $F$ is not close to a dictatorship,
it is worthwhile noting that in principle, when $\epsilon$ is sufficiently small,
$\Delta(f,DICT) = \theta(m \Delta(F,DICT))$.

\subsection{Step 2}

The second step is a reduction from social welfare functions to social choice functions, with a different
notion of manipulation: by many voters.

\begin{definition}
Let $f$ be an SCF and let $a,b$ be two alternatives. Denote
$$M^{a,b}(f) = \Pr[f(x)=a, \:f(x')=b],$$
where $x,x'$ are chosen at random in $L^n$ with $x^{a,b}=(x')^{a,b}$.
\end{definition}

I.e. this definition captures the scenario where all voters together attempt to manipulate $f$ to be $b$ rather
than $a$ by re-choosing at random all their preferences -- except those between $a$ and $b$.  This
definition does not require that anyone in particular gain from this, just that something ``unexpected''
happens.

Our reduction works as follows. Given an SCF $f$ with low $M^{a,b}(f)$ for all $a,b$, we construct
a neutral GSWF $F$ (satisfying the IIA property,) that is close to
always having a Generalized Condorcet Winner, hence, by Lemma \ref{step1}, close to $DICT$.
Our construction will be such that this will imply that $f$ too is close to $DICT$, hence:

\begin{lemma}\label{step2}
For every fixed $m$ there exists $\delta>0$ such that for all $\epsilon>0$  the following holds.
Let $f$ be a neutral SCF among $m$ alternatives such that $\Delta(f,DICT) > \epsilon$.
Then for all $(a,b)$ we have $M^{a,b}(f) \ge \delta$. For $m=3$, $\delta=C\epsilon^2$,
where $C>0$ is an absolute constant.
\end{lemma}

\subsection{Step 3}

The third step shows that, for $m=3$, the probability of this type of multi-voter
manipulation yields a lower bound on the probability of single-voter manipulation:

\begin{lemma}\label{step3}
For every SCF $f$ on 3 alternatives and every $a,b \in A$,
$M^{a,b}(f) \le 6 \sum_i M_i(f)$.
\end{lemma}

The combination of this lemma with lemma \ref{step2} immediately implies the main theorem.
We do not know how to generalize this lemma to $m>3$.

\section{Proof of Step 1}

The case $m=3$ of Lemma \ref{step1} is shown in \cite{Ka02}:

\begin{theorem} [(\cite{Ka02}, Theorem 7.2)]
For every balanced IIA GSWF $F$ and every $\epsilon>0$
we have that
$\Delta(F,DICT) \ge \epsilon$ implies $\Delta(F,SWF) \ge C\epsilon$, where $C>0$ is an absolute constant.
\end{theorem}

Balanced here means that for all $a,b \in A$ , $\Pr[F^{a,b}(x^{a,b})=a]=1/2$, where $x^{a,b}$ is chosen uniformly
in $\{0,1\}^n$.
This condition is certainly true for neutral functions
since for those $\Pr[F^{a,b}(x^{a,b})=a]=\Pr[F^{a,b}(x^{a,b})=b]$.

This theorem implies lemma \ref{step1} for the case
$m=3$ since in this case having a Generalized Condorcet Winner in $F(x)$ is equivalent to
being a full order, $F(x) \in L$, i.e. for $m=3$, $SWF = CW$.

For $m>3$, having a Generalized Condorcet Winner is a weaker requirement than being a full order, and thus
Kalai's theorem does not directly imply lemma \ref{step1}.  The cases of $m=4,5$ are proved in the appendix. %Section \ref {s:45}.
At this point we can prove the lemma for all values of $m$ using full induction.
Once we have lemma \ref{step1} for values $m_1$ and $m_2$ then we can also
deduce it for $m=m_1+m_2$.
\remove {\enote{I made a change due to
the new formulation in terms of the distance of $f$
rather than $F$ from DICT }}
Let $F= f^{\otimes {m \choose 2}}$, and $f$ be $\epsilon$-far from dictatorship.  Using the lemma on
$m_1, m_2$ we get that with probability at least $\delta_{m_1}$ there is no Generalized Condorect Winner among the first $m_1$
alternatives and with probability at least $\delta_{m_2}$ there is no Generalized Condorcet Winner among the last $m_2$ alternatives.
The key point here is that these two events are independent since the voter preferences within two disjoint sets
of alternatives are totally independent of each other.  Thus the probability that there is no Generalized Condorect Winner at all
is at least $\delta_{m_1} \cdot \delta_{m_2}$.  Staring with $\delta = C\epsilon$ for $m=3,4,5$ we get
$\delta \ge  (C \epsilon)^{\lfloor m/3 \rfloor}$ for general $m$. (Luckily, every integer $m>5$
is of the form $m=3x+4y$, with $x$ and $y$ nonnegative integers.)

\section{Proof of Step 2}

We will use any function $f$ with low $M^{a,b}$ for all $a,b \in [m]$ to construct a neutral IIA GSWF $F$ that is close
to always having a Generalized Condorcet Winner.

\begin{definition}
For every $a,b \in [m]$ let us define $F^{a,b}(x^{a,b})$  to be $a$ if
$$\Pr_{x'} [f(x')=a \ | \ x'^{a,b} = x^{a,b}] > \Pr_{x'} [f(x')=b \ | \ x'^{a,b} = x^{a,b}]$$
and to be b if the reverse inequality holds.
(Here, for sake of clarity we write the output of $F^{a,b}$ as $a$ or $b$ rather than 0 or 1.)
In the case of equality we break
the tie defining $F^{a,b}(x^{a,b})$ to be the majority vote over the $n$ values $x_i^{a,b}$.
(And if $n$ is even we add a further tie breaking rule, e.g. in case of a tie take the majority
of all voters but the first.)

This defines an IIA GSWF $F$.
\end{definition}

It will be convenient to analyze $F$ using a measure that is closely related to $M^{a,b}$:

\begin{definition}\label{maj}
We will say that $x$ is a {\em minority preference on $(a,b)$} if
$f(x)=a$ and $F^{a,b}(x)=b$ or if $f(x)=b$ and $F^{a,b}(x)=a$.

We say that $x$ is a {\em minority preference} if it is a minority preference for at least some pair $(a,b)$.
Denote by $N^{a,b}(f)=\Pr_{x}[x \: \mbox{is a minority preference on $(a,b)$}]$.
\end{definition}

It is easy to relate $N^{a,b}$ to $M^{a,b}$, using the Cauchy-Schwarz inequality:

\begin{lemma}\label{mstarn}
For every SCF $f$ and every $a,b$ we have that $M^{a,b}(f) \ge (N^{a,b}(f))^2$.
\end{lemma}

\begin{proof}
Fix $f$ and $a,b$.
Define $p_a(x^{a,b}) = \Pr[f(x)=a]$ and
$p_b(x^{a,b})=\Pr[f(x)=b]$ where the probabilities are taken over a random $x \in L$ whose
$(a,b)$'th column agrees with the given one $x^{a,b}$.
In these terms
$$
M^{a,b}(f)=E_{x^{a,b}}[p_a(x^{a,b})p_b(x^{a,b})]
$$
while
$$
N^{a,b}(f) = E_{x^{a,b}}[\min\{p_a(x^{a,b}),p_b(x^{a,b})\}].
$$
So we can bound
$$
M^{a,b}(f)=E[p_a \cdot p_b] \ge $$
$$E[(\min\{p_a,p_b\})^2] $$
$$\ge (E [\min\{p_a,p_b\}])^2=(N^{a,b}(f))^2,
$$
where the second inequality is Cauchy-Schwarz.
\end{proof}

We are now ready to prove the properties of the constructed $F$.

\begin{lemma} \label{ndict}
\begin{enumerate}
\item $\Delta(F,CW) \le \sum_{a,b \in [m]} N^{a,b}(f)$.
\item $\Delta(F,DICT) \ge \Delta(f,DICT) - \sum_{a,b \in [m]} N^{a,b}(f)$.
\item If $f$ is neutral then so is $F$.
\end{enumerate}
\end{lemma}

\begin{proof}
Fix $x$ that is not a minority preference and denote $a=f(x)$.
Note that by definition for all $b$ we must have that
$F^{a,b}(x)=a$ and thus $a$ is a Generalized Condorcet Winner in $F(x)$.

Item 1 follows since we get immediately that the distance from $CW$ is bounded from above
by the probability that $x$ is a minority preference.

For item 2, for those $x$ with $F(x)=Dict_i(x)$ we get that $x^{a,b}_i=1$ and thus $a$ is the
top choice of $x_i$ and thus $f(x)=dict_i(x)$.  (Similarly for anti-dictatorship.)

Item 3 is trivial by definition.
\end{proof}

From this lemma it follows that an SCF $f$ that is
far from dictatorship and has low multi-voter manipulation $M^{a,b}(f)$ yields an IIA SWF that
is far from dictatorship and close to always having a Generalized Condorcet Winner.

We should note that a certain converse is true as well.  If we have an IIA GSWF $F$
with $\Delta(F,CW) \le \epsilon$
then we can define $f(x)$ to be the Generalized Condorcet Winner of $F(x)$ (and some other candidate if a Generalized Condorcet Winner
does not exist -- this rule can be made to retain neutrality).   Note that due to $F$ beeing IIA,
$f(x)=a$ and $f(x')=b$ with $x^{a,b}=x'^{a,b}$ can happen only if either $F(x)$ or $F(x')$ does not have a
Generalized Condorcet Winner, thus $M^{a,b}(f) \le 2 \epsilon$.

\section{Proof of Step 3}

In this section we study the combinatorial structures underlying $M^{a,b}(f)$ and $\sum_i M_i(f)$ and
relating them.

For the rest of this section let us fix the set of alternatives $[m]=\{a,b,c\}$ and fix an SCF $f$.

\begin{definition}\label{papb}
For every value of $z^{a,b}$ denote
$A(z^{a,b})= \{x | x^{a,b} =z^{a,b}, \ f(x)=a \}$ and $B(z^{a,b})= \{x | x^{a,b} =z^{a,b},\  f(x)=b \}$.
\end{definition}

Note that for every possible value of $z^{a,b}$ there are exactly $3^n$ possible values of $x$ that agree
with it: in $x$ the preferences of all voters between $a$ and $b$ are given and
each voter may choose one of three locations for $c$: above both $a$ and $b$, below both of them, or
between them.  Thus, for every fixed $z^{a,b}$ it will be usefull to view the set $\{x | x^{a,b} =z^{a,b}\}^n$
as isomorphic to $\{0,1,2\}^n = \{above,between,below\}^n$ and to view $A(z^{a,b})$ and $B(z^{a,b})$ as residing in this space.  We will use $v=(v_1 ... v_n)$ to denote a point in this space.  Thus
once $x_i^{a,b}$ is fixed, $v_i \in \{0,1,2\}$ encodes both $x_i^{b,c}$ and $x_i^{c,a}$.  E.g. $x_i^{a,b}=0$ and
$v_i=0$ encodes the preference $c \succ_i b \succ_i a$.

In these terms we can directly express $M^{a,b}(f)$:

\begin{lemma}
$$
M^{a,b}(f) = E_{x}\left[ {{|A(x^{a,b})|} \over {3^n}} \cdot {{|B(x^{a,b})|} \over {3^n}}\right].
$$
\end{lemma}

\begin{proof}
This is just a re-wording of the definition of $M^{a,b}(f)$.
\end{proof}

In order to relate $M_i(f)$ to these sets
we need to add directed edges to $\{0,1,2\}^n$ that capture (some of) the
profitable manipulations by voter $i$.
For each fixed value of $x^{a,b}$,
for each voter $i$ and each $v_{-i}$ we will have 3 directed edges going in direction $i$ between the
possible values of $v_i$:
$0 \rightarrow 1$, $1 \rightarrow 2$, and
$0 \rightarrow 2$.  We will count the directed edges going ``upward'' from $A$ and from $B$.

\begin{definition}
For a subset $A \subseteq \{0,1,2\}^n$, its upper edge border, $\partial A$ is the
set of directed edges
defined above whose tail is in $A$ and whose head is not in $A$.  Formally,
$$
\partial_iA = \{(v_{-i},v_i,v'_i) \ |\ (v_{-i},v_i) \in A, \ (v_{-i},v'_i) \not\in A, \ v_i<v'_i \}
$$
and
$\partial A = \bigcup_i \partial_i(A)$.
\end{definition}

We now relate $M_i(f)$ to the upper edge border in direction $i$.

\begin{lemma}
$M_i(f) \ge \frac 1 6 3^{-n} E_{x} \left[  |\partial_iA(x^{a,b})| + |\partial_iB(x^{a,b})| \right]$.
\end{lemma}

\begin{proof}
Let us choose $x$ and $x'$ at random, differing only (possibly) in that $x_i$ may be different from $x'_i$
and provide a lower bound on the probability that the $i$th coordinate of one
is a profitable manipulation of the other.
We perform this random choice as follows: first $x^{a,b}_{-i} \in \{0,1\}^n$,
$x^{a,b}_i \in \{0,1\}$ and $x'^{a,b}_i \in \{0,1\}$ are chosen at random.
With probability of exactly $1/2$, we have that
$x'^{a,b}_i = x^{a,b}_i$ and the rest of the analysis will be conditioned on this indeed happening (a conditioning
that does not affect the distribution chosen).
We next choose $v_{-i} \in \{0,1,2\}^{n-1}$
and finally  $v_i \in \{0,1,2\}$ and $v'_i \in \{0,1,2\}$ are chosen at random.
Note that if $(v_{-i},v_i,v'_i) \in \partial_i A$ then either $x'_i$ is a manipulation of $x$
or $x_i$ is a manipulation of $x'$. This is because when moving from $x_i$ to $x'_i$ voter
$i$ lowered his relative preference of $c$ without changing his ranking of the pair $(a,b)$, with
$f(x)$ changing from $a$ to some other result $t \in \{b,c\}$.
If, according to $x_i$, voter $i$ prefers $t$ to $a$ then $x'_i$ is a manipulation.
If, in the other case, $x_i$ ranks $a$ above $t$ then this is definitely
true for $x'_i$ too, since when moving from $x_i$ to $x'_i$ $a$'s rank relative to $b$
did not change, whereas it improved relative to $c$. Hence, in the second case $x_i$
is a manipulation of $x'$. Thus every edge in $\partial_i A$ corresponds to a pair
$x,x'$ that is chosen with probability $\frac 1 2 \cdot 3^{-n} \cdot \frac 1 3$, which
contributes in total $\frac 1 6 3^{-n} E_{x} [ |\partial_iA(x^{a,b})|.$
A similar contribution comes from the case $(v_{-i},v_i,v'_i) \in \partial_i B$.
\end{proof}
%\enote{rewrote this, the old version is hidden in the Latex file below}
\remove{
Conditioned on the fixed $x^{a,b}$ and $v_{-i}$, $M_i(f)$ is the probability that $i$ will prefer
$f(x^{a,b}, v'_i, v_{-i})$ to $f(x^{a,b}, v_i, v_{-i})$. Notice that under this conditioning, there are
exactly $9$ possible choices for the pair $v_i$ and $v'_i$.  We will show that every fixed pair $w_i,w'_i$
which is an edge,
$(v_{-i},w_i,w'_i) \in \partial_i(A(x^{a,b}))$,
corresponds to (at least) one choice of $v_i$ and $v'_i$ in which $i$ prefers
$f(x^{a,b}, v'_i, v_{-i})$ to $f(x^{a,b}, v_i, v_{-i})$.  This is similarly true for
$(v_{-i},w_i,w'_i) \in \partial_i(B(x^{a,b}))$ and the lemma will follow.

The fact that $(v_{-i},w_i,w'_i) \in \partial_i(A(x^{a,b}))$ means that $f(x^{a,b},v_{-i},w_i)=a$
but $f(x^{a,b},v_{-i},w'_i) \ne a$.  There are now two cases:
\begin{enumerate}
\item The case $f(x^{a,b},v_{-i},w'_i) = b$. This is further
split into two subcases according to $x^{a,b}_i$.  If $x^{a,b}_i=0$, i.e. $i$ prefers
$b$ to $a$ then the choice $v_i=w_i, v'_i=w'_i$ is the
required manipulation.  Otherwise, it is the choice $v_i=w'_i,v'_i=w_i$.
\item The case $f(x^{a,b},v_{-i},w'_i) = c$.  By definition of the directed edges, $w'_i>w_i$, which means that
$c$ is ranked higher in $w_i$ than in $w'_i$.  Now, if $w_i$ prefers $c$ to $a$, then $v_i=w_i,v'_i=w'_i$
is a manipulation, otherwise $w_i$ and thus also $w'_i$ prefer $a$ to $c$, and thus
$v_i=w'_i,v'_i=w_i$ is a manipulation.
\end{enumerate}
}

Summing over $i$ we get
\begin{corollary}
$$
\frac {3^{-n}}{6} E_x \left[ \left( |\partial A(x^{a,b})|+|\partial B(x^{a,b})| \right) \right] \leq \sum_i M_i(f)).
$$
\end{corollary}

This corollary, the fact that
$$
E \left[  \frac {|A(x^{a,b})|}{3^n} \cdot \frac {|A(x^{a,b})|}{3^n}  \right] = M^{a,b}(f),
$$
and the following lemma, when applied to $A(x^{a,b})$ and $B(x^{a,b})$
will finally yield Lemma \ref{step3}, completing the proof of step three.

\begin{lemma}
For every disjoint $A,B \subset \{0,1,2\}^n$ we have that $|\partial(A)|+|\partial(B)| \ge 3^{-n}|A||B|$.
\end{lemma}

\begin{proof}
Let us start by ``shifting'' both $A$ and $B$ upward.  I.e. for each $i = 1,\ldots,n$, at stage $i$ we replace $A$
by a set of the same size that is monotone in the $i$'th coordinate by moving every $v$ with $v_i<2$ to have
$v_i=2$ if this is not already in $A$, and then moving every $v$ that remained with $v_i=0$ to have
$v_i=1$ if this is not already in $A$.  Thus the $i$'th stage leaves $A$ to be ``monotone in the $i$'th coordinate'',
i.e.
if $v \in A$ and $v'_i \ge v_i$ then $(v_{-i}v'_i) \in A$.  Clearly such a stage
does not change the size of $A$. As usual in such operations,
it is not hard to check that this operation does not increase $\partial_jA$ for any $j$, and does not destroy the monotonicity
in previous indices.

Let $A'$ and $B'$ be the sets we obtained after all $n$ stages.
Since every edge in $\partial A$ corresponds at most to one vertex shifted from $A$ to $A'$, and the same holds for $B$
we have
$$
|A' \setminus A| \le |\partial(A)|, \ |B' \setminus B| \le |\partial(B)|.
$$
Since both $A'$ and $B'$ are monotone in the partial order of the lattice $\{0,1,2\}^n$
they are "positively correlated", by Harris' theorem \cite{Ha60},
or by the better known generalization, the FKG inequality \cite{FKG71}.
This means that
$$
|A' \cap B'|/3^n \ge |A'|/3^n \cdot |B'|/3^n = |A|/3^n \cdot |B|/3^n.
$$
However $A$ and $B$ were disjoint so
$A' \cap B' \subseteq (A'\setminus A) \cup (B' \setminus B)$, which completes the proof.
\end{proof}

{\bf Remark:} Elchanan Mossel recently proved:
%We describe now Mossel recent theorem,  allows an extension of Theorem \ref {t:mt}.

\begin {theorem}[Mossel \cite {mos09}, Corollary 1.4]
Let $\delta = \exp (-C/\epsilon^{21})$, for some absolute constant $C$.

For every IIA GSWF $F$ and every $\epsilon>0$
we have that if for every two alternatives $a$ and $b$ the probability that $F$ ranks $ a $ above $b$ is at
least $\epsilon$, and
$\Delta(F,DICT) \ge \epsilon$ implies $\Delta(F,SWF) \ge \delta$.
\end {theorem}

Mossel's theorem allows for an extension of Theorem \ref {t:mt} (with much worse quantitative estimates,)
to the case of social choice functions on three alternatives
with the property
that the probability for each alternative to be chosen is at least $\epsilon$.

\section{Social Welfare Functions with $m=4,5$}
\label {s:45}
We will deduce now Lemma \ref {step1} (the generalization of Kalai's theorem) for $m=4,5$
from the case $m=3$.
%Unfortunately, so far we have had no success in pushing these methods
%further.

Our starting point is the following version of Theorem \ref{step2} (which follows easily from the original version).
\begin{theorem} There exists a constant $\delta_3 >0 $ such that the following holds.
Let $F$ be a neutral IIA GSWF, with $n$ voters and $3$ alternatives, determined by a function
$f : \{0,1\}^n  \ra \{0,1\}$.
Let $C_3(F)$ be the probability over $X$ that $F(X)$ has a Generalized Condorcet Winner.
Then
$$
C_3(F)  \leq 1- \delta_3 \cdot \Delta(f,DICT).
$$
\end{theorem}
What we  prove is the same for $m=4,5$:
\begin{theorem} For $m=4,5,$ there exists a constant $\delta_m > 0 $ such that the following holds.
Let $F$ be a neutral IIA GSWF, with $n$ voters and $m$ alternatives, determined by a function
$f : \{0,1\}^n  \ra \{0,1\}$.
Let $C_m(F)$ be the probability over $X$ that $F(X)$ has a Generalized Condorcet Winner.
Then
$$
C_m(F) \leq 1- \delta_m \cdot  \Delta(f,DICT).
$$
\end{theorem}
We begin by considering the case $m=4$. Let there be four candidates $\{1,2,3,4\}$.
Let $X_{ij}$ be the random $0/1$ variable that indicates the event that
$i$ beats $j$ according to $F(X)$ where $X$ is chosen at random.
Note that, from neutrality, the probability of $F(X)$ having a Generalized Condorcet Winner is precisely
four times the probability  that candidate 1 is a Generalized Condorcet Winner.
Hence
\beq{inex4}
C_4(F) = 4 \cdot E[\prod_{j=2}^4 X_{1j}] = 4 \cdot E[\prod_{j=2}^4 (1-X_{j1})].
\enq
Before expanding this, note the following. From neutrality of $F$ it follows that
$f$ is balanced, hence for $i \in \{2,3,4\}$
$$
E[X_{i1}]=1/2.
$$
Next, for any $i,j \in \{2,3,4\}$, we use the theorem for $m=3$ on the
set $\{1,i,j\}$ to get
$$
E[X_{j1}X_{i1}] = \frac {C_3(F)}  3   \leq \frac 1 3 (1- \delta_3 \Delta(f,DICT)).
$$
Finally, from neutrality, the probability of candidate 1 being a
Generalized Condorcet Winner is precisely equal to the probability of him being
a Generalized Condorcet Loser, i.e.

$$
E[\prod_{j=2}^4 X_{1j}] = E[\prod_{j=2}^4 X_{j1}]
$$

Now, using these observations
we expand  (\ref{inex4}) to get
$$
C_4(F) = 4 ( 1 -3\cdot \frac 1 2 + 3\frac {C_3(F)} {3} - \frac {C_4} {4}  ),
$$
or
\beq{34}
C_4(F) = 2C_3(F) - 1,
\enq
which implies
$$
C_4(F) \leq 1 - 2\delta_3 \Delta(f,DICT),
$$
and yields the theorem for $m=4$ with $\delta_4 = 2\delta_3$.

Next we consider the case of $m=5$. Unfortunately, the first natural
step, generalizing the inclusion-exclusion type formula (\ref{inex4}) to get
$$
C_5(F)= 5 \cdot E[\prod_{j=2}^5 (1-X_{j1})]
$$
does not help, due to an annoying prosaic reason: the two terms
 $ E[\prod_{j=2}^5 (X_{1j})]$ and $E[\prod_{j=2}^5 (1-X_{j1})] $ which appear
on the two sides of the equation have the same sign, and cancel out.
To remedy this we consider the case of $m=6$.
We begin with
$$
C_6(F) = 6 \cdot E[\prod_{j=2}^6 (1-X_{j1})].
$$
This gives
$$
C_6(F) = 6 (
1 - \frac 5 2 + {5 \choose 2} \frac{C_3(F)}{3}-
$$

$$- {5 \choose 3} \frac{C_4(F)}{4} +{5 \choose 4 } \frac{C_5(F)}{5}
- \frac{C_6(F)}{6} ) .
$$
Rearranging, and using (\ref{34}) we get,
$$
\frac{C_6(F)}{3} + \frac{5C_3(F)}{3} - 1 = C_5(F).
$$
Since $1 \geq C_6(F) $, and $ (1- \delta_3 \Delta(f,DICT)) \geq C_3(F)$
This yields
$$
1 - \frac 5 3 \delta_3 \Delta(f,DICT) \geq  C_5(F),
$$
i.e. the theorem for $m=5$ with $\delta_5 =\frac 5 3 \delta_3.$

}

\end{document}